\documentclass{amsart}

\usepackage{amssymb}
\usepackage{amsmath}
\usepackage{amsthm}
\usepackage[pdftex]{graphicx} 
\usepackage{enumerate}

\tolerance = 10000

\setlength{\oddsidemargin}{8mm}
\setlength{\evensidemargin}{8mm}
\setlength{\topmargin}{-5mm}

\setlength{\textwidth}{145mm}
\setlength{\textheight}{220mm}

\setlength{\parindent}{0pt}
\setlength{\parskip}{1.5ex plus 0.5ex minus 0.2ex}

\newtheorem{theorem}{Theorem}[section]
\newtheorem{lemma}[theorem]{Lemma}

\newtheorem{proposition}[theorem]{Proposition}

\theoremstyle{definition}
\newtheorem{example}[theorem]{Example}
\newtheorem{remark}[theorem]{Remark}
\newtheorem{definition}[theorem]{Definition}

\newtheorem{questions}[theorem]{Questions}

\numberwithin{equation}{theorem}

\newcommand{\pa}{\theta_1}
\newcommand{\pb}{\theta_2}
\newcommand{\pc}{\theta_3}

\begin{document}
\title{The $q$-Division Ring and its Fixed Rings}
\author{Si\^an Fryer}
\address{School of Mathematics, The University of Manchester, Manchester M13 9PL, UK}
\email{sian.fryer@manchester.ac.uk}
\begin{abstract}
We prove that the fixed ring of the $q$-division ring $k_q(x,y)$ under any finite group of monomial automorphisms is isomorphic to $k_q(x,y)$ for the same $q$.  In a similar manner, we also show that this phenomenon extends to an automorphism that is defined only on $k_q(x,y)$ and does not restrict to $k_q[x^{\pm1},y^{\pm1}]$.  We then use these results to answer several questions posed by Artamonov and Cohn about the endomorphisms and automorphisms of $k_q(x,y)$.
\end{abstract}

\maketitle

\setcounter{section}{-1}
\section{Introduction}
Let $k_q[x,y] =k\{ x,y \} / (xy-qyx)$ denote the \textit{quantum plane}, where $k$ is any field of characteristic zero and $q \in k^{\times}$.  Since $k_q[x,y]$ is a Noetherian domain its full ring of fractions is a division ring, denoted by $D$ or $k_q(x,y)$.

In \cite{Artin} Artin gave a conjectured classification of the non-commutative surfaces up to birational equivalence, a conjecture which has informed and motivated much of the research in non-commutative algebra and algebraic geometry in the intervening twenty years. For recent work relating to this, see for example \cite{jason1}, \cite{daniel1}, \cite{sue1}.

For algebraists, the interest in this conjecture lies in its rephrasing in terms of division rings.  Stated informally, the conjecture predicts that the only division rings appearing as function fields of non-commutative surfaces are: (i) division rings of algebras finite-dimensional over function fields of transcendence degree 2; (ii) division rings of Ore extensions of function fields of curves; (iii) the degree 0 part of the graded division ring of the 3-dimensional Sklyanin algebra.  Artin's conjecture is discussed in more detail in \cite{SV1}, which also includes a definition of the Sklyanin algebra.

This motivates an investigation of the fixed rings of these division rings under finite groups of automorphisms; it is a natural area where new division rings not already appearing on this list might be found.  We will focus on $k_q(x,y)$, where some results are already known: for example when the group of automorphisms restricts to automorphisms on $k_q[x,y]$.
\begin{theorem}\label{AD_prop}\cite[Proposition~3.4]{AD1} Let $k$ be a field of characteristic zero, and $q \in k^{\times}$ not a root of unity.  Denote by $R_q$ the quantum plane $k_q[x,y]$ and by $D_q$ its full ring of fractions.  Then:
\begin{enumerate}[(i)]
\item For $q' \in k$, we have $D_q \cong D_{q'}$ if and only if $q' = q^{\pm1}$, if and only if $R_q \cong R_{q'}$.
\item For all finite subgroups $G$ of $Aut(R_q)$, $D_q^G \cong D_{q'}$ for $q' = q^{|G|}$.
\end{enumerate}
\end{theorem}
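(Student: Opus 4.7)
I will address (i) first, then (ii).

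For (i), the implication $q' = q^{\pm 1} \Rightarrow R_q \cong R_{q'}$ is immediate: the identity gives $R_q \cong R_q$, while the swap $x \leftrightarrow y$ carries the relation $xy = qyx$ of $R_q$ to $yx = qxy$, i.e.\ $xy = q^{-1} yx$, which defines $R_{q^{-1}}$; taking fraction fields then gives $D_q \cong D_{q'}$. For the converse chain of implications I would extract $q$ (up to inversion) as a division-ring invariant. Set
\[
\Lambda(D) = \bigl\langle \lambda \in k^\times : \exists\, u,v \in D^\times \text{ non-central with } uv = \lambda vu \bigr\rangle \leq k^\times,
\]
which is plainly preserved under $k$-algebra isomorphism. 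Monomial computations inside $D_q$ show $\langle q \rangle \subseteq \Lambda(D_q)$; the reverse inclusion, which is the main obstacle of the argument, requires showing that any two non-central quasi-commuting elements of $D_q$ quasi-commute with parameter in $\langle q \rangle$. Since $q$ is not a root of unity, $\langle q \rangle$ is infinite cyclic and pins down $q$ up to inversion, giving $D_q \cong D_{q'} \Rightarrow q' = q^{\pm 1}$. Applying the same invariant inside $R_q$ disposes of $R_q \cong R_{q'} \Rightarrow q' = q^{\pm 1}$.

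For (ii), I first invoke the classical classification of automorphisms of the quantum plane: since $q$ is not a root of unity, $\mathrm{Aut}_k(R_q) \cong (k^\times)^2$ acting diagonally by $x \mapsto \alpha x$, $y \mapsto \beta y$. A finite subgroup $G$ then determines a sublattice
\[
L = \{(i,j) \in \mathbb{Z}^2 : \alpha^i \beta^j = 1 \text{ for all } (\alpha,\beta) \in G\} \subseteq \mathbb{Z}^2
\]
of index $|G|$, and $R_q^G$ has $k$-basis $\{x^i y^j : (i,j) \in L\}$. Choosing a $\mathbb{Z}$-basis $(a_1,b_1),(a_2,b_2)$ of $L$ and setting $u = x^{a_1} y^{b_1}$, $v = x^{a_2} y^{b_2}$, the quantum commutation $yx = q^{-1}xy$ yields
\[
uv = q^{a_1 b_2 - a_2 b_1} vu = q^{\pm |G|} vu,
\]
so $u,v$ generate a copy of $R_{q^{\pm |G|}}$ inside $R_q^G$. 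Since every $(i,j) \in L$ is an integer combination of $(a_1,b_1)$ and $(a_2,b_2)$, every monomial in $D_q^G = \mathrm{Frac}(R_q^G)$ (equality being standard for finite groups acting on Noetherian Ore domains) is a scalar multiple of $u^m v^n$ with $m,n \in \mathbb{Z}$. Hence $D_q^G = \mathrm{Frac}(R_{q^{\pm |G|}}) = D_{q^{\pm |G|}}$, which equals $D_{q^{|G|}}$ by (i).

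The delicate point throughout is the verification of the invariant in (i); once the classification of $\mathrm{Aut}_k(R_q)$ and part (i) are in hand, (ii) reduces to elementary lattice bookkeeping together with the Ore-localization property of fixed rings.
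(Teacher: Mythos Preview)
The paper does not prove this statement: Theorem~\ref{AD_prop} is quoted as background from \cite[Proposition~3.4]{AD1}, so there is no in-paper argument to compare yours against.

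On the merits of your sketch: your treatment of (ii) is essentially the standard argument and is correct in outline. The classification $\mathrm{Aut}_k(R_q)\cong(k^\times)^2$ for $q$ not a root of unity is the known input; the lattice argument then goes through as you describe, with two minor caveats. First, your chosen basis vectors $(a_i,b_i)$ of $L$ may have negative entries, so $u,v$ lie in the quantum Laurent ring rather than in $R_q$ itself---harmless, since you only use their fraction field. Second, the equality $[\mathbb{Z}^2:L]=|G|$ amounts to surjectivity of the restriction map $\mathbb{Z}^2\to\widehat{G}$, which deserves a line of justification (it holds because $G$ embeds in $\mu_N\times\mu_N$ for some $N$, whose character group is already hit by $\mathbb{Z}^2$).

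For (i), your invariant $\Lambda(D)$ is the right kind of object, and is close in spirit to how Alev and Dumas argue, but you have correctly flagged that the entire content of (i) is concentrated in the unproved inclusion $\Lambda(D_q)\subseteq\langle q\rangle$. This is genuinely the hard step: one must show that whenever non-central $u,v\in D_q$ satisfy $uv=\lambda vu$, the scalar $\lambda$ lies in $\langle q\rangle$. The standard route passes to the completion $k_q(y)(\!(x)\!)$, uses the $x$-adic valuation to reduce to elements of the form $(\text{rational function in }y)\cdot x^m$, and then reads off $\lambda$ as a power of $q$ from the monomial commutation. As written, your (i) is a plan rather than a proof; everything you would need to supply is precisely what \cite{AD1} does.
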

The methods used in the proof of this result do not generalise to groups of automorphisms defined only on $D$, nor even to those defined on $k_q[x^{\pm1},y^{\pm1}]$ and extended to $D$.  And nor should we expect them to, as the following result demonstrates:
\begin{theorem}\cite[\S13.6]{SV1}\label{order_2_monomial_result}
Let $\tau$ be the automorphism defined on $D$ by
\[\tau: x \mapsto x^{-1}, \ y \mapsto y^{-1}\]
Then the fixed ring $D^{\tau}$ is isomorphic to $D$, for the same choice of $q$.
\end{theorem}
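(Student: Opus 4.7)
The plan is to prove $D^\tau \cong D$ by exhibiting inside $D^\tau$ a $q$-commuting pair of $\tau$-invariant elements $X, Y$ and verifying that their Ore localization $k_q(X, Y)$ is all of $D^\tau$.

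The preparatory step is to check that $\tau$ is a well-defined automorphism of $D$ of order $2$: inverting $xy = qyx$ yields $x^{-1}y^{-1} = qy^{-1}x^{-1}$, so $\tau$ preserves the defining relation. Since $\operatorname{char}(k) = 0$, the $\tau$-eigenspace decomposition $D = D^+ \oplus D^-$ holds, and for any nonzero anti-invariant $t$ (e.g.\ $t = x - x^{-1}$) we have $D^- = D^\tau t$; hence $D = D^\tau \oplus D^\tau t$ is free of rank $2$ as a left $D^\tau$-module, and $[D : D^\tau] = 2$.

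The central and most delicate step is to produce the pair $X, Y \in D^\tau$ with $XY = qYX$; this is the main obstacle. Using $\tau(x^iy^j) = x^{-i}y^{-j}$, every $\tau$-invariant Laurent polynomial is a linear combination of symmetric sums $x^iy^j + x^{-i}y^{-j}$, but a direct calculation shows that no pair of such two-term sums can $q$-commute, as the constraints on the commutation parameter $\Delta = il - jk$ from equating coefficients become contradictory. The $q$-commutators among natural invariants such as $u = x + x^{-1}$, $v = y + y^{-1}$, $w = xy + x^{-1}y^{-1}$, $w' = xy^{-1} + x^{-1}y$ do satisfy non-trivial identities like $uv - qvu = (1 - q^2)w'$ and $wv - qvw = (1 - q^2)u$, which suggests that any workable $X, Y$ must blend several invariants together or be rational expressions in $D$. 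Once such $X, Y$ are identified, the subring $k_q\langle X, Y\rangle$ admits no relations beyond $XY = qYX$ (using that $q$ is not a root of unity to ensure the necessary rigidity of $k_q[x, y]$), yielding an embedding $k_q(x, y) \hookrightarrow D^\tau$ after Ore localization.

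Finally, to prove the reverse inclusion $D^\tau \subseteq k_q(X, Y)$, one argues directly that every $\tau$-invariant element of $D$ is expressible as a rational function of $X, Y$. A natural approach is an induction on some measure of complexity (for instance the $\mathbb{Z}^2$-bidegree of the support) of a $\tau$-invariant Laurent polynomial, reducing via the already-established $q$-commutation and the explicit relations among the natural invariants above. Combined with the embedding from the previous step, this gives $D^\tau = k_q(X, Y) \cong k_q(x, y)$, as required.
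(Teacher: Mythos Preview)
Your outline has the right overall architecture --- find a $q$-commuting pair in $D^{\tau}$, then use the degree-$2$ extension $D^{\tau}\subset D$ to conclude equality --- but it has a genuine gap at the one step you yourself flag as ``the main obstacle'': you never actually produce the pair $X,Y$. Everything after that step is conditional on having them, so as written this is not a proof but a programme.

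The paper (following \cite[\S13.6]{SV1}) resolves exactly this point with the explicit generators
\[
u=(x-x^{-1})(y^{-1}-y)^{-1},\qquad v=(xy-x^{-1}y^{-1})(y^{-1}-y)^{-1},
\]
which satisfy $uv=qvu$ and are visibly $\tau$-fixed as quotients of $\tau$-anti-invariants. Your observation that no two-term symmetric sum $x^iy^j+x^{-i}y^{-j}$ can work is correct, and the missing idea is precisely that one should pass to \emph{ratios} of anti-invariants rather than sums of invariants. Once $u,v$ are in hand, the verification that $k_q(u,v)=D^{\tau}$ proceeds along the lines you sketch (and as the paper carries out in detail for the analogous map $\varphi$ in \S\ref{fixed ring}): one checks that adjoining a single anti-invariant such as $y-y^{-1}$ to $k_q(u,v)$ recovers all of $D$, so $[D:k_q(u,v)]\le 2$, forcing $k_q(u,v)=D^{\tau}$.

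Two smaller issues: your appeal to ``$q$ not a root of unity'' to get the embedding $k_q(x,y)\hookrightarrow D^{\tau}$ is unnecessary and would weaken the statement; the result holds for all nonzero $q$. And your proposed induction on bidegree for the reverse inclusion is too vague to assess --- the cleaner route, once $u,v$ are known, is the index-$2$ argument above.
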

The map $\tau$ is an example of a \textit{monomial automorphism}: one where the images of $x$ and $y$ are both monomials (for the precise definition, see Definition~\ref{def_monomial_autos}).  A natural question to ask is whether Theorem~\ref{order_2_monomial_result} should extend to all automorphisms of this form; another is what happens when we consider automorphisms which do \textit{not} restrict to the quantum Laurent polynomial ring $k_q[x^{\pm1},y^{\pm1}]$.

In \S\ref{fixed ring} we define an automorphism of order 2 on $D$ which does not restrict to $k_q[x^{\pm1},y^{\pm1}]$ and examine its fixed ring, which we show is once again isomorphic to $D$.  In \S\ref{more fixed rings} we extend Theorem~\ref{order_2_monomial_result} to cover all finite groups of monomial automorphisms on $D$; combined, these give rise to the following theorem.
\begin{theorem}[Theorem~\ref{epic_theorem}, Theorem~\ref{thm_monomial_results_2}]\label{thm_monomial_results} Let $k$ be a field of characteristic zero and $q \in k^{\times}$.
\begin{enumerate}[(i)]
\item Define an automorphism on $D$ by
\[\varphi: x \mapsto (y^{-1}-q^{-1}y)x^{-1}\quad y \mapsto -y^{-1}\]
and let $G$ be the group generated by $\varphi$.  Then $D^G \cong D$ for the same choice of $q$.
\item Suppose $k$ contains a third root of unity $\omega$, and both a second and third root of $q$.  If $G$ is a finite group of monomial automorphisms of $D$ then $D^G \cong D$, again for the same choice of $q$.
\end{enumerate}
\end{theorem}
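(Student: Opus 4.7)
Our overall strategy for both parts is to produce explicit elements $X, Y \in D^G$ satisfying $XY = qYX$, so that $x \mapsto X$, $y \mapsto Y$ extends to an embedding $k_q(x,y) \hookrightarrow D^G$. Combined with the classical identity $[D:D^G] = |G|$ for finite outer group actions on division rings together with a direct verification that $[D:k_q(X,Y)] = |G|$, this forces $D^G = k_q(X,Y) \cong D$. All of the substance is therefore in finding the right pair $X, Y$ in each case, which in both parts is the main obstacle.

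For part (i), I would first confirm that $\varphi(x)\varphi(y) = q\varphi(y)\varphi(x)$ and that $\varphi^2 = \mathrm{id}$, so $\langle\varphi\rangle$ has order $2$. The element $Y := y - y^{-1}$ is visibly fixed; a matching $X$ is less immediate, because naive symmetrisations like $x + \varphi(x)$ or $x\varphi(x) + \varphi(x)x$ do not $q$-commute with $Y$. My plan is to make an ansatz $X = \sum c_{a,b}\,(x^a y^b + \varphi(x^a y^b))$ over small exponents and determine the coefficients by imposing $XY = qYX$. Once $X$ is identified, I would exhibit a single element $t \in D \setminus k_q(X,Y)$ with $D = k_q(X,Y) + k_q(X,Y)\cdot t$, giving $[D:k_q(X,Y)] = 2$, and then conclude by the degree argument. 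The difficulty here is precisely the lack of obvious symmetry: because $\varphi$ is not monomial, there is no natural candidate for $X$ and one must search more cleverly.

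For part (ii), I would begin by identifying the full group of monomial automorphisms. Requiring that $x \mapsto \lambda x^a y^b$, $y \mapsto \mu x^c y^d$ preserves $xy = qyx$ forces $ad-bc = 1$ (assuming $q$ not a root of unity), so this group is $(k^{\times})^2 \rtimes SL_2(\mathbb{Z})$. Since finite subgroups of $SL_2(\mathbb{Z})$ are cyclic of orders $1, 2, 3, 4, 6$, any finite group $G$ of monomial automorphisms is an extension of one of these by a group of scalar twists. The plan is a layered case analysis organised around a tower of normal subgroups: the order-$2$ case with matrix $-I$ is Theorem~\ref{order_2_monomial_result}; for the order-$3$ case (hence also order $6$), I would decompose $D$ into $\omega$-eigenspaces under the cyclic generator and build $X, Y$ as products of compatible eigenvectors, rescaled by $\sqrt[3]{q}$ so that $XY = qYX$ holds exactly rather than up to a root-of-unity twist; for the order-$4$ case I would first pass to $D^{\tau} \cong D$ via Theorem~\ref{order_2_monomial_result}, leaving a residual order-$2$ action treated using $\sqrt{q}$ for rescaling. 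Scalar twists add only minor coefficient adjustments. The hypotheses on $k$---a primitive third root of unity, $\sqrt{q}$, $\sqrt[3]{q}$---are precisely what are needed to realise these generators inside $k_q(x,y)$. Here the main obstacle is the cumulative case analysis and the fact that different constructions are needed for each cyclic order.
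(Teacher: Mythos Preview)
Your overall framework---exhibit $q$-commuting $X,Y$ in $D^G$, then use a degree argument to force $k_q(X,Y)=D^G$---matches the paper's strategy exactly. But there are two substantive gaps where the paper's argument contains ideas your plan does not anticipate.

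\textbf{Part (i): the generators are not found by a monomial ansatz.} Your element $Y=y-y^{-1}$ is indeed fixed, but the paper does not build a $q$-commuting partner for it directly. Instead it sets $\Lambda=y^{-1}-q^{-1}y$, defines
\[
a=x-\Lambda x^{-1},\quad b=y+y^{-1},\quad c=xy+\Lambda x^{-1}y^{-1},\quad h=b^{-1}a,\quad g=b^{-1}c,
\]
and shows that $h,g$ satisfy the \emph{quantum Weyl} relation $hg-qgh=1-q$; only after a further change of variables do $q$-commuting generators appear. The point is that a small-exponent symmetrised ansatz in $x^ay^b$ does not produce the right object: $h$ and $g$ are genuinely rational (they carry $b^{-1}$), and the relation one should look for first is Weyl, not quantum-plane. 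Your plan gives no mechanism for discovering either of these features. The degree-$2$ step is then done not via $[D:D^G]=2$ a priori, but by exhibiting $D$ as $R[b;\gamma]/(b^2+\mu)$ and invoking Cohn's criterion for quadratic skew extensions.

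\textbf{Part (ii): the order-$4$ residual action is exactly part (i).} You correctly reduce the order-$4$ case to a residual order-$2$ action on $D^{\tau}\cong k_q(u,v)$, but your description of that residual action as something handled by ``$\sqrt{q}$ rescaling'' is wrong. Computing $\rho$ on the generators $u,v$ of $D^{\tau}$ gives
\[
\rho(u)=-u^{-1},\qquad \rho(v)=(u^{-1}-qu)v^{-1},
\]
which is precisely the non-monomial automorphism $\varphi$ of part (i), up to swapping variables. So part (i) is not a standalone curiosity: it is the missing lemma that makes the order-$4$ (and hence the full monomial) case go through. Your proposal treats (i) and (ii) as independent, which would leave the order-$4$ case unproved. (A minor point: in this paper ``monomial automorphism'' means exactly an element of $SL_2(\mathbb{Z})$, so your extension by scalar twists is unnecessary.) The order-$3$ case in the paper does follow roughly your eigenvector-quotient idea for one generator, $g=a^{-1}b$, but the matching $q$-commuting partner $f$ is far from a simple product of eigenvectors and requires substantial computation.
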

This is in contrast to Theorem~\ref{AD_prop}, where the choice of $q$ in the fixed ring depended on the order of the group.

Finally in \S\ref{consequences} we turn our attention to the homomorphisms of $D$, and tackle several open questions posed by Artamonov and Cohn in 1999.  In their paper \cite{AC1}, they prove that any homomorphism from $D$ to itself decomposes into a product of well-behaved ``elementary automorphisms'' and a certain conjugation map $c_z$.  Here $c_z(r) = zrz^{-1}$ is conjugation by an element $z \in k_q(y)(\!(x)\!)$, the Laurent power series ring defined in \S\ref{notation} below.

Artamonov and Cohn ask whether these conjugation maps must always be inner, i.e. whether the conjugating element they construct will always be in $D$ itself.  They also ask whether any endomorphism of $D$ must be an automorphism, a question also considered by Alev and Dumas in \cite{AD4}.  
In \S\ref{consequences} we answer both of these questions in the negative with the following theorem.
\begin{theorem}[Theorem~\ref{thm_AC_results_2}]\label{thm_AC_results}Let $k$ be a field of characteristic zero and $q \in k^{\times}$ not a root of unity.  Then:
\begin{enumerate}[(i)]
\item The $q$-division ring $D$ admits examples of bijective conjugation maps by elements $z \not\in D$; these include examples satisfying $z^n \in D$ for some positive $n$, and also those such that $z^n \not\in D$ for all $n \geq 1$.
\item $D$ also admits an endomorphism which is not an automorphism, which can be represented in the form of a conjugation map.  
\end{enumerate}
\end{theorem}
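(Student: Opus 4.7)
The main lever is Theorem~\ref{thm_monomial_results}(i), which provides a $k$-algebra isomorphism $\Phi \colon D \xrightarrow{\sim} D^{\varphi}$ for the explicit order-two automorphism $\varphi$ given there, together with the Artamonov--Cohn decomposition \cite{AC1} of arbitrary endomorphisms of $D$. For part (ii), compose $\Phi$ with the inclusion $D^{\varphi} \hookrightarrow D$ to obtain an injective $k$-algebra endomorphism $\psi \colon D \to D$ with image $D^{\varphi} \subsetneq D$.  Since $\psi$ is not surjective it is not an automorphism.  Applying the Artamonov--Cohn decomposition to $\psi$, write $\psi$ as a finite product of elementary automorphisms together with a conjugation map $c_{z}$ for some $z \in k_q(y)(\!(x)\!)$. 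Every elementary automorphism is bijective on $D$ but $\psi$ is not, so $c_{z}$ must fail to be bijective on $D$; in particular $z \notin D$, as any element of $D$ would produce an inner (hence bijective) automorphism.  This simultaneously exhibits the non-automorphism endomorphism and its representation as a conjugation map.

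For part (i) the aim is to produce conjugators in $k_q(y)(\!(x)\!) \setminus D$ which induce \emph{automorphisms} of $D$.  For the torsion example I would realise $\varphi$ itself as $c_{w}$: expanding $w = \sum_{i \geq N} w_{i}(y) x^{i}$, the defining relations $w y = -y^{-1} w$ and $w x = (y^{-1} - q^{-1}y) x^{-1} w$ become a functional condition on each coefficient $w_{i}(y) \in k_q(y)$ together with a recursion determining $w_{i+1}$ from $w_{i}$.  Solve the recursion, verify that the resulting Laurent series is well-defined, and check that $w \notin D$; then $c_{w} = \varphi$ is a bijective conjugation by an element outside $D$. Since $\varphi^{2} = \mathrm{id}$, conjugation by $w^{2}$ is trivial on $D$, and because the centraliser of $D$ inside $k_q(y)(\!(x)\!)$ reduces to the base field $k$ (using that $q$ is not a root of unity), we obtain $w^{2} \in k \subset D$.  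For the non-torsion example I would modify $w$ -- for instance by multiplying it by a carefully chosen unit of $D$, or by combining it with a second series realising an automorphism of infinite order -- to produce a new conjugator $z'$ whose induced automorphism has infinite order and whose $n$-th powers $(z')^{n}$ each retain an irrational Laurent expansion, so that $(z')^{n} \notin D$ for every $n \geq 1$.

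The principal obstacle is the explicit construction in part (i): solving the recursive functional equations for $w$ in the Laurent series ring, proving that the resulting series is not a rational expression in $x$ and $y$, and producing a non-torsion variant whose positive powers also all fail to be rational.  Once these conjugators are in hand, the remaining steps -- invoking the fixed-ring isomorphism of Theorem~\ref{thm_monomial_results}(i), applying the Artamonov--Cohn decomposition, and computing centralisers inside $k_q(y)(\!(x)\!)$ -- are routine.
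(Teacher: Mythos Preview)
Your treatment of part~(ii) matches the paper's: compose the isomorphism $D\cong D^{\varphi}$ with the inclusion, then peel off the elementary factors in the Artamonov--Cohn decomposition to expose a conjugation $c_{z^{-1}}$ with $z^{-1}Dz\subsetneq D$. That part is fine.

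The torsion half of part~(i), however, has a genuine obstruction. You propose to realise $\varphi$ itself as $c_w$ for some $w\in k_q(y)(\!(x)\!)$, but no such $w$ exists. Two independent computations show this. First, the $x$-adic valuation on $k_q(y)(\!(x)\!)$ is multiplicative, so $v(wxw^{-1})=v(x)=1$, whereas $v\big(\varphi(x)\big)=v\big((y^{-1}-q^{-1}y)x^{-1}\big)=-1$. Second, writing $w=\sum_i w_i(y)x^i$ and comparing coefficients in $wy=-y^{-1}w$ gives $q^{\,i}w_i(y)\,y=-y^{-1}w_i(y)$; since $w_i\in k(y)$ commutes with $y$, any nonzero $w_i$ would force $y^2=-q^{-i}$, impossible in $k(y)$. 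This is why the Artamonov--Cohn decomposition carries the extra factor $\tau^{\epsilon}$: maps such as $\varphi$ necessarily have $\epsilon=1$ and are \emph{not} pure conjugations. Consequently your recursion for $w$ has no solution, and the argument for $w^2\in k$ via the centraliser never gets off the ground.

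The paper sidesteps this by changing the ambient ring. Since $E:=D^{\varphi}$ is itself a $q$-division ring (Theorem~\ref{epic_theorem}), one works \emph{inside} $E$ and takes the conjugation $\gamma(r)=brb^{-1}$ with $b=y+y^{-1}\in D\setminus E$; this was already shown to be an automorphism of $E$ in Lemma~\ref{gamma_def}. One then checks that $\gamma(f),\gamma(g)$ have the standard form required by Proposition~\ref{AC_theorem_recap}, so that $\gamma=c_{z^{-1}}$ for the recursively defined $z$, and the centraliser lemma forces $z$ to be a scalar multiple of $b$. Hence $z\notin E$ but $z^2\in E$, since $b^2=(y-y^{-1})^2+4\in E$. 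The key idea you are missing is that the ``outer'' conjugator is found not in the completion of $D$ but already in the degree-two overring $D\supset E$.

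For the non-torsion example your plan is too vague to evaluate, and the paper's construction is of an entirely different flavour: an explicit product $\psi=h_3\circ h_2\circ h_1$ of three elementary automorphisms is written down, one verifies that $(1+y)^{-1}x$ is fixed by $\psi$, and hence that $\psi^n(y)$ is a polynomial in $x$ of degree exactly $n$. A short degree lemma (if $c_z$ is inner then $\deg_X c_z(Y)=0$) then shows $z^n\notin D$ for all $n\geq 1$. No modification of the torsion conjugator is needed.
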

We also prove results indicating a possible direction for further study of the automorphism group $Aut(D)$, and finish by listing several new open questions raised by our results.

\textbf{Acknowledgements.} The author would like to thank their PhD supervisor, Toby Stafford, for suggesting the original problem and for innumerable helpful conversations and suggestions during the work.  This research was supported by an EPSRC Doctoral Training Award, and we gratefully acknowledge their support.

\section{Definitions and Notation}\label{notation}
We begin by outlining the notation and definitions we will need.  Throughout, fix $k$ to be a field of characteristic zero and $q \in k^{\times}$.  For most of this paper we require no conditions on $q$ except that it be non-zero, although in \S\ref{consequences} we will need to restrict to the case of $q$ not a root of unity.

Let $R$ be any ring, $\alpha$ an endomorphism of $R$ and $\delta$ a left $\alpha$-derivation.  The (left) \textit{Ore extension} $R[x;\alpha, \delta]$ is an overring of $R$, which is free as a left $R$-module with basis $\{1,x,x^2, \dots\}$ and commutation relation
\[xr = \alpha(r)x + \delta(r).\]
We write $R[x; \alpha]$ or $R[x;\delta]$ when $\delta = 0$ or $\alpha = 1$ respectively.

The ring $k_q[x,y]$ can be viewed as the Ore extension $k[y][x;\alpha]$, where $\alpha$ is the automorphism defined on $k[y]$ by $\alpha(y) = qy$.  For $r \in k_q[x,y]$, let $deg_x(r)$ be the degree of $r$ as a polynomial in $x$.

By localizing $k_q[x,y]$ at the set of all its monomials, we obtain the ring of \textit{quantum Laurent polynomials} $k_q[x^{\pm1},y^{\pm1}]$.  This ring sits strictly between $k_q[x,y]$ and the division ring $k_q(x,y)$, and the properties of it and its fixed rings are studied in \cite{Baudry}.  

The \textit{$q$-division ring} $D = k_q(x,y)$ embeds naturally into a larger division ring, namely the ring of Laurent power series
\[k_q(y)(\!(x)\!) = \left\{ \sum_{i \geq n} a_i x^i : n \in \mathbb{Z}, a_i \in k(y)\right\}\]
subject to the same relation $xy = qyx$.  It is often easier to do computations in $k_q(y)(\!(x)\!)$ than in $D$, and we will identify elements of $D$ with their image in $k_q(y)(\!(x)\!)$ without comment.

If $R$ is any ring and $z$ an invertible element, we denote the resulting conjugation map on $R$ by
\[c_z: r \mapsto zrz^{-1} \quad \forall r \in R.\]
Since we will define conjugation maps on $D$ with $z \in k_q(y)(\!(x)\!) \backslash D$, the following distinction will be important: we call a conjugation map $c_z$ an \textit{inner automorphism} on $R$ if $z, z^{-1} \in R$.

Finally, if $G$ is a subgroup of $Aut(R)$ we define the fixed ring to be
\[R^G = \{r \in R: g(r) = r, \ \forall g \in G\}.\]
If $G = \langle \varphi \rangle$ is cyclic, we will also denote the fixed ring by $R^{\varphi}$.

\section{An automorphism and its fixed ring}\label{fixed ring}
Define on $k_q(x,y)$ the map
\begin{equation}\label{def_varphi}\varphi: x \mapsto (y^{-1}-q^{-1}y)x^{-1}, \quad y \mapsto -y^{-1}.\end{equation}
Since $x$ only appears once in the image, it is easy to see that these images $q$-commute and so this is a homomorphism.  We can also easily check that it has order 2, since
\begin{align*}
\varphi^2(x) &= \varphi\Big((y^{-1}-q^{-1}y)x^{-1}\Big) \\
&= (-y+q^{-1}y^{-1})x(y^{-1}-q^{-1}y)^{-1} \\
&= (q^{-1}y^{-1}-y)(q^{-1}y^{-1}-y)^{-1}x \\
&= x
\end{align*}
and it is therefore an automorphism on $k_q(x,y)$. The aim of this section is to prove the following result.
\begin{theorem}\label{epic_theorem} Let $G$ be the group generated by $\varphi$.  Then $k_q(x,y)^G \cong k_q(x,y)$ as $k$-algebras.\end{theorem}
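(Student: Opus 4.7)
The plan is to exhibit an explicit $k$-algebra isomorphism $D \xrightarrow{\sim} D^G$, which we will produce by finding two elements $X,Y \in D^G$ satisfying $XY = qYX$ and showing that they generate $D^G$. Since $D$ is the localization of the quantum plane at all monomials, such a pair determines a $k$-algebra surjection $k_q(x,y) \twoheadrightarrow k_q(X,Y) \subseteq D^G$; combining surjectivity with the reverse inclusion $k_q(X,Y) \supseteq D^G$ yields the required isomorphism.

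The first building block is easy. Since $\varphi(y-y^{-1}) = -y^{-1}-(-y) = y-y^{-1}$, the element $v := y - y^{-1}$ lies in $D^G$. Candidates for a partner come from trace elements $\mathrm{Tr}(r) = r + \varphi(r)$, which are automatically fixed: for example
\[
u_1 := x + \varphi(x) = x + (y^{-1}-q^{-1}y)x^{-1}, \qquad u_2 := yx + \varphi(yx) = yx + (q^{-1}-y^{-2})x^{-1}
\]
both lie in $D^G$. More generally, for any $p(y)\in k(y)$, a direct check shows that $p(y)x + p(-y^{-1})(y^{-1}-q^{-1}y)x^{-1}$ is fixed. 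A straightforward but slightly lengthy calculation using $x y^n = q^n y^n x$ in a Laurent expansion yields the identity $u_1 v - q^{-1}v u_1 = (q-q^{-1})\,u_2$, so $u_2$ automatically belongs to the subring generated by $u_1$ and $v$.

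The technical heart of the proof is to upgrade the near-commutation relations between $u_1, u_2, v$ to an actual $q$-commutation. One computes that $u_1 u_2 - q u_2 u_1$ lies in $k(v) \subseteq D^G$; that is, the $x^{\pm 2}$ terms cancel but a polynomial correction in $v$ remains. My plan is therefore to modify $u_1$ and $u_2$ by suitable elements of $k(v)$, writing $X := u_1 + p(v)$ and $Y := u_2 + r(v)$ for specific $p,r \in k(v)$ chosen so that the correction terms in $XY - qYX$ vanish identically. Matching coefficients of $x$, $x^{-1}$ and the constant term gives a small triangular system for the shifts $p,r$ whose solvability I expect (and whose existence is essentially what makes the theorem true). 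This is the step where I anticipate the main obstacle: arranging the shifts so that \emph{all three} remaining coefficients (of $x$, $x^{-1}$, and $x^{0}$) vanish simultaneously is the only real constraint, and getting this explicit form right is delicate — especially since the shift functions must be polynomials in $v$, whereas $B_0(qy)$-type expressions produced by passing $x$ past $p(v)$ are only polynomials in $y$, not in $v$.

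Once such $X, Y \in D^G$ with $XY = qYX$ are in hand, the map $k_q(x,y) \to D^G$ sending $x \mapsto X$, $y \mapsto Y$ is a well-defined $k$-algebra homomorphism from a division ring, hence injective. For surjectivity I will use a Galois-theoretic index count: by general results on finite groups acting on division rings, $D$ is a free right $D^G$-module of rank $|G| = 2$, with basis $\{1, \eta\}$ for $\eta := y + y^{-1}$ (antisymmetric under $\varphi$). To prove $[D : k_q(X,Y)] \leq 2$, I will show that $y$ (and hence $x$) can be recovered from $X, Y$, and $\eta$ — using the relation $\eta^2 = v^2 + 4 \in k(Y)$ together with $y = (v+\eta)/2$ — so that $D = k_q(X,Y)\cdot 1 + k_q(X,Y)\cdot \eta$. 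Sandwiching $k_q(X,Y) \subseteq D^G \subseteq D$ with matching indices forces $k_q(X,Y) = D^G$, which completes the proof.
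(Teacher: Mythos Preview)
Your overall architecture — produce a $q$-commuting pair in $D^G$ and then use a degree-$2$ index argument with $\eta = y+y^{-1}$ — is exactly the shape of the paper's proof. But the crucial middle step is not actually carried out, and the specific mechanism you propose is unlikely to work as stated.

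The problem is precisely the one you flag. If $p,r\in k(v)$ with $v=y-y^{-1}$, then commuting $x$ past $p(v)$ produces $p(qy-q^{-1}y^{-1})$, which lies in $k(y)$ but not in $k(v)$. So the coefficient-matching equations for the $x^{\pm1}$ and $x^0$ terms of $XY-qYX$ are functional equations in $k(y)$, not in $k(v)$; you are asking for solutions $p,r$ in the proper subfield $k(v)\subset k(y)$, and there is no triangular structure forcing them to land there. Already the simplest case $p=0$ forces $r(qy-q^{-1}y^{-1})=q\,r(y-y^{-1})$, which has no nonzero solution in $k(v)$ for generic $q$. So ``I expect solvability'' is the genuine gap, and the additive-shift ansatz appears to be the wrong shape.

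The paper sidesteps this by using \emph{ratios of anti-invariants} rather than additive corrections. With $\Lambda=y^{-1}-q^{-1}y$, the elements $a=x-\Lambda x^{-1}$, $b=y+y^{-1}$, $c=xy+\Lambda x^{-1}y^{-1}$ are each negated by $\varphi$, so $h=b^{-1}a$ and $g=b^{-1}c$ are fixed; a direct computation gives the quantum Weyl relation $hg-qgh=1-q$, which one then converts to honest $q$-commutation by replacing $h$ with $(hg-gh)/(1-q)$. The division by $b$ is what absorbs the $\alpha$-twisting that wrecks your additive scheme.

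There is a second gap in your final paragraph: you write $\eta^2=v^2+4\in k(Y)$, but your $Y$ is $u_2+r(v)$, not $v$. You would still need to prove $v\in k_q(X,Y)$ before you can conclude $\eta^2\in k_q(X,Y)$ and run the index argument. In the paper this is a separate, nontrivial computation (an explicit identity expressing $y-y^{-1}$ in terms of $h$ and $g$), not an automatic consequence of having the generators.
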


Before tackling the proof of this theorem, we will need some subsidiary results.

Recall that the algebra generated by two elements $u$, $v$ subject to the relation $uv-qvu = \lambda$ (for some $\lambda \in k^{\times}$) is called the \textit{quantum Weyl algebra}.  This ring also has a full ring of fractions, which can be seen to be equal to $D$ by sending $u$ to the commutator $uv-vu$ \cite[Proposition~3.2]{AD2}.

We will construct a pair of elements in $k_q(x,y)^G$ which satisfy a quantum Weyl relation and show that they generate the fixed ring.  A simple change of variables then yields the desired isomorphism.

In order to simplify the notation, set $\Lambda = y^{-1} - q^{-1}y$.  Inspired by \cite{division} and \cite[\S13.6]{SV1}, we define our generators using a few simple building blocks.  We set
\begin{equation}\begin{gathered}\label{abc_defs}
a= x - \Lambda x^{-1},\qquad  b= y+y^{-1}, \qquad c= xy + \Lambda x^{-1}y^{-1} \\
 h = b^{-1}a, \qquad g = b^{-1}c\end{gathered}
\end{equation}
and verify that $h$ and $g$ satisfy the required properties.
\begin{lemma} 
The elements $h$ and $g$ are fixed by $\varphi$ and satisfy the relation 
\[hg-qgh = 1-q.\]\end{lemma}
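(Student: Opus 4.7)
The plan is to split the proof into two halves, matching the structure of the statement. First I would show that $\varphi$ sends each of $a$, $b$, $c$ to its negative, so that the minus signs cancel in $h = b^{-1}a$ and $g = b^{-1}c$. Second, I would verify the commutation relation $hg - qgh = 1-q$ by working inside the Laurent power series ring $k_q(y)(\!(x)\!)$ and computing coefficient by coefficient in powers of $x$.

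Since $\varphi(b) = -y^{-1} - y = -b$ is immediate, the nontrivial verifications are $\varphi(a) = -a$ and $\varphi(c) = -c$. Both reduce to the identity $\varphi(\Lambda x^{-1}) = x$: applying $y \mapsto -y^{-1}$ gives $\varphi(\Lambda) = q^{-1}y^{-1} - y$, which already appears in the excerpt's verification of $\varphi^2(x) = x$ as the conjugate $x \Lambda x^{-1}$, so that $\varphi(\Lambda x^{-1}) = \varphi(\Lambda)\, x \Lambda^{-1} = x$. This yields $\varphi(a) = \Lambda x^{-1} - x = -a$ directly, and an analogous term-by-term computation (using $\varphi(y^{-1}) = -y$) gives $\varphi(c) = -(\Lambda x^{-1}y^{-1} + xy) = -c$.

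For the commutation relation, since $\Lambda, b \in k(y)$ and $xy = qyx$, both generators can be rewritten as $k(y)$-linear combinations of $x$ and $x^{-1}$:
\[ h = b^{-1} x - \Lambda b^{-1} x^{-1}, \qquad g = q y b^{-1} x + q \Lambda y^{-1} b^{-1} x^{-1}. \]
Multiplying out via $x f(y) = f(qy)\, x$, the $x^{\pm 2}$ coefficients of $hg - qgh$ cancel directly by symmetry. The main bookkeeping obstacle is the $x^0$ coefficient: after substitution the four surviving terms split into a pair with denominator $(qy + q^{-1}y^{-1})^{-1}$ and a pair with denominator $(q^{-1}y + qy^{-1})^{-1}$. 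The identities $y^{-1} + q^2 y = q(qy + q^{-1}y^{-1})$ and $y + q^2 y^{-1} = q(q^{-1}y + qy^{-1})$ then clear these denominators, reducing the constant term to $qb^{-1}(\varphi(\Lambda) - \Lambda)$. Since $\varphi(\Lambda) - \Lambda = (q^{-1}-1)(y+y^{-1}) = (q^{-1}-1)b$, this collapses to $q(q^{-1}-1) = 1-q$, as required.
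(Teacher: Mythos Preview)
Your proof is correct and follows essentially the same approach as the paper: both arguments first observe that $\varphi$ negates $a$, $b$, $c$, and then verify the relation by expanding the products as $k(y)$-linear combinations of $x^2$, $x^0$, $x^{-2}$ and checking each coefficient. The only cosmetic difference is that the paper left-multiplies by $b$ first, reducing to the equivalent identity $ab^{-1}c - qcb^{-1}a = (1-q)b$, whereas you work directly with $h$ and $g$; one small notational remark is that the quantity you call $\varphi(\Lambda)$ in the constant-term step arises naturally as $\alpha(\Lambda)$ from the commutation rule $xf(y)=f(qy)x$, and it is a (harmless) coincidence that $\alpha(\Lambda)=\varphi(\Lambda)$ here.
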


\begin{proof}
The first statement is trivial, since $\varphi$ acts on $a$, $b$ and $c$ as multiplication by $-1$.

After multiplying through by $b$, we see that the equality $hg - qgh = 1-q$ is equivalent to 
\[ab^{-1}c - qcb^{-1}a = (1-q)b\]
which allows us to verify it by direct computation.  Indeed,
\begin{equation}\label{abc}\begin{aligned}
 ab^{-1}c &= (x-\Lambda x^{-1})(y+y^{-1})^{-1}(xy + \Lambda x^{-1}y^{-1}) \\
&= \Big((qy + q^{-1}y^{-1})^{-1}x - \Lambda(q^{-1}y + qy^{-1})^{-1}x^{-1}\Big)\big(xy + \Lambda x^{-1}y^{-1}\big) \\
&= q^2y(qy + q^{-1}y^{-1})^{-1}x^2 + \alpha(\Lambda)(qy + q^{-1}y^{-1})^{-1}y^{-1} \\
& \quad \qquad - \Lambda(q^{-1}y + qy^{-1})^{-1}y - q^2y^{-1}\Lambda \alpha^{-1}(\Lambda)(q^{-1}y + qy^{-1})^{-1}x^{-2}
\end{aligned}\end{equation}
\begin{equation}\label{qcba}\begin{aligned}
qcb^{-1}a &= q(xy + \Lambda x^{-1}y^{-1})(y+y^{-1})^{-1}(x-\Lambda x^{-1}) \\
&= q\Big(qy(qy + q^{-1}y^{-1})^{-1}x + q\Lambda y^{-1}(q^{-1}y + qy^{-1})^{-1}x^{-1}\Big)\big(x-\Lambda x^{-1}\big) \\
&= q^2y(qy+q^{-1}y^{-1})^{-1}x^2 - q^2y\alpha(\Lambda)(qy + q^{-1}y^{-1})^{-1} \\
&\quad \qquad + q^2\Lambda y^{-1}(q^{-1}y + qy^{-1})^{-1} - q^2y^{-1}\Lambda \alpha^{-1}(\Lambda)(q^{-1}y + qy^{-1})^{-1} x^{-2}
\end{aligned}\end{equation}
Putting these together, we see that the terms in $x^2$ and $x^{-2}$ cancel out, leaving us with
\begin{align*}
 ab^{-1}c - qcb^{-1}a &= \alpha(\Lambda)(qy + q^{-1}y^{-1})^{-1}(y^{-1} + q^2y) \\
& \quad \qquad - \Lambda(q^{-1}y + qy^{-1})^{-1}(y+q^2y^{-1}) \\
&= \alpha(\Lambda)q - \Lambda q \\
&= (q^{-1}y^{-1} - y)q - (y^{-1}-q^{-1}y)q\\
&=(1-q)b \qedhere
\end{align*}\end{proof}
Let $R$ be the division ring generated by $h$ and $g$; it is a subring of $k_q(x,y)^G$, and the next step is to show that these two rings are actually equal.  We can do this by checking that $[k_q(x,y) : R] = 2$, since $R \subseteq k_q(x,y)^G \subsetneq k_q(x,y)$ will then imply $R = k_q(x,y)^G$.  

\begin{lemma}\label{computation_lemma}
\begin{enumerate}[(i)]
 \item The following elements are all in $R$:
 \[x + \Lambda x^{-1}, \quad y - y^{-1}, \quad xy - \Lambda x^{-1}y^{-1}.\]
\item Let $b = y+y^{-1}$ as in \eqref{abc_defs}.  Then $b^2 \in R$ and $R\langle b \rangle = k_q(x,y)$.
\end{enumerate}
\end{lemma}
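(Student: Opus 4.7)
My plan is to establish part (i) by direct computation, after which part (ii) will follow quickly. For (i), I would first check that each of
\[\widetilde{a} := x + \Lambda x^{-1}, \qquad \widetilde{b} := y - y^{-1}, \qquad \widetilde{c} := xy - \Lambda x^{-1}y^{-1}\]
is fixed by $\varphi$ (using $\varphi(\Lambda) = q^{-1}y^{-1} - y = \alpha(\Lambda)$, after which $\varphi(\widetilde{a}) = \Lambda x^{-1} + \alpha(\Lambda) x \Lambda^{-1} = \Lambda x^{-1} + x$ since $\alpha(\Lambda)x = x\Lambda$, and similarly for $\widetilde{b}$ and $\widetilde{c}$), so each lies in $D^G$; the real task is then to exhibit them as specific rational expressions in $h$ and $g$.

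The key auxiliary identities driving those expressions are
\[2c = a\widetilde{b} + \widetilde{a}\,b, \qquad 2\widetilde{c} = ab + \widetilde{a}\widetilde{b},\]
obtained by expanding the four products $(x \pm \Lambda x^{-1})(y \pm y^{-1})$ and summing, together with the commutator relation $a\widetilde{a} - \widetilde{a}a = 2(q^{-1}-1)b$ and its analogue $\widetilde{a}^2 - a^2 = -2(1+q^{-1})\widetilde{b}$. These bridge the target elements $\widetilde{a},\widetilde{b},\widetilde{c}$ and the building blocks $a,b,c$, whose quotients $h = b^{-1}a$ and $g = b^{-1}c$ already lie in $R$. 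Combined with the quantum Weyl relation $hg - qgh = 1-q$ and the consequent conjugation identities such as $hgh^{-1} = qg + (1-q)h^{-1}$, one can in principle assemble the desired rational expressions; natural candidates to pursue include $h^{-1}g = a^{-1}c$, $g^{-1}h = c^{-1}a$, and symmetrised combinations of $hg$ and $gh$ designed to cancel the unwanted $x^{\pm 2}$ components in the Laurent series picture in $k_q(y)(\!(x)\!)$ and leave behind a pure $k(y)$-element such as $\widetilde{b}$.

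The main obstacle, as I see it, is exactly this constructive step: $R$ is closed under the field operations but offers no direct access to individual Laurent coefficients, so the required cancellations must be engineered algebraically rather than by projection. Once $\widetilde{b} \in R$ is in hand, the first claim of part (ii) is immediate from the commutative identity
\[b^2 = (y+y^{-1})^2 = (y-y^{-1})^2 + 4 = \widetilde{b}^2 + 4,\]
which takes place in the subfield $k(y)$. For the equality $R\langle b\rangle = k_q(x,y)$, the elementary decompositions $2y = b + \widetilde{b}$ and $2x = \widetilde{a} + a = \widetilde{a} + bh$ place both $x$ and $y$ in $R\langle b\rangle$; since $k_q(x,y)$ is generated as a division ring by $x$ and $y$, this gives the forward inclusion, and the reverse is automatic.
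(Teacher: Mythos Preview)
Your treatment of part (ii) is correct and matches the paper almost exactly: the identity $b^2=\widetilde{b}^{\,2}+4$ and the decompositions $2y=b+\widetilde{b}$, $2x=\widetilde{a}+bh$ are precisely what is used there (the paper writes $x=y^{-1}h+q^{-1}g$, which is the same content).

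The gap is in part (i). You correctly verify that $\widetilde{a},\widetilde{b},\widetilde{c}$ are $\varphi$-fixed, and your auxiliary identities
\[
2c=a\widetilde{b}+\widetilde{a}\,b,\qquad 2\widetilde{c}=ab+\widetilde{a}\widetilde{b},\qquad a\widetilde{a}-\widetilde{a}a=2(q^{-1}-1)b,\qquad \widetilde{a}^{\,2}-a^{2}=-2(1+q^{-1})\widetilde{b}
\]
are all right. But none of them, nor any combination you have written down, actually exhibits $\widetilde{b}$ as an element of $R$: each of them relates the unknown elements $\widetilde{a},\widetilde{b},\widetilde{c}$ to one another and to $a,b,c$, and since $a,b,c\notin R$ individually you cannot break into the system without an additional input. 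You acknowledge this yourself (``the main obstacle''), and then stop at ``one can in principle assemble the desired rational expressions''. That sentence is the missing proof.

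The paper supplies exactly the missing step: it gives the explicit formula
\[
y-y^{-1}=(hg-1)^{-1}\bigl(qg^{2}-h^{2}\bigr),
\]
verified by a direct (and somewhat lengthy) computation equivalent to $(ab^{-1}c-b)(y-y^{-1})=qcb^{-1}c-ab^{-1}a$. Once $\widetilde{b}\in R$ is secured in this way, the paper obtains $\widetilde{a}$ and $\widetilde{c}$ not via your bilinear identities but through the single observation $x=y^{-1}h+q^{-1}g$; applying $\varphi$ gives $\Lambda x^{-1}=q^{-1}g-yh$, whence $x+\Lambda x^{-1}=(y^{-1}-y)h+2q^{-1}g\in R$ and similarly $xy-\Lambda x^{-1}y^{-1}=2qh+(y-y^{-1})g\in R$. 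Your identities would also work at this stage, but only after $\widetilde{b}\in R$ is in hand and after one knows conjugation by $b$ preserves $R$---which in the paper is proved \emph{using} this lemma, so invoking it here would be circular.
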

\begin{proof} $(i)$ 
 We begin by proving directly that $y-y^{-1} \in R$, as the others will follow easily from this.  Indeed, we will show that
\begin{equation*}y-y^{-1} = (hg-1)^{-1}(qg^2-h^2).\end{equation*}
Using the definitions in \eqref{abc_defs} this is equivalent to checking that
\begin{equation}\label{computation_2}(ab^{-1}c - b)(y-y^{-1}) = qcb^{-1}c - ab^{-1}a.\end{equation}
Expanding out the components on the right in \eqref{computation_2}, we get
\begin{equation*}\begin{aligned}qcb^{-1}c &= q(xy + \Lambda x^{-1}y^{-1})(y+y^{-1})^{-1}(xy + \Lambda x^{-1}y^{-1}) \\
&= \Big (q^2y(qy + q^{-1}y^{-1})^{-1}x + q^2y^{-1}\Lambda (qy^{-1} + q^{-1}y)^{-1}x^{-1}\Big)(xy + \Lambda x^{-1}y^{-1}) \\
&= q^2(qy + q^{-1}y^{-1})^{-1}yx^2y + q^2\alpha(\Lambda)(qy + q^{-1}y^{-1})^{-1} \\
& \quad \qquad+ q^2\Lambda(qy^{-1} + q^{-1}y)^{-1} + q^2\Lambda \alpha^{-1}(\Lambda)(qy^{-1} + q^{-1}y)^{-1}y^{-1}x^{-2}y^{-1} \\
& \\
ab^{-1}a &= (x-\Lambda x^{-1})(y+y^{-1})^{-1}(x-\Lambda x^{-1}) \\
&= \Big((qy + q^{-1}y^{-1})^{-1}x - \Lambda (q^{-1}y + qy^{-1})^{-1}x^{-1}\Big)(x-\Lambda x^{-1}) \\
&= q^2(qy + q^{-1}y^{-1})^{-1}yx^2y^{-1} - \alpha(\Lambda)(qy + q^{-1}y^{-1})^{-1} \\
&\quad \qquad - \Lambda(q^{-1}y + qy^{-1})^{-1} + q^2\Lambda \alpha^{-1}(\Lambda)(q^{-1}y + qy^{-1})^{-1}y^{-1} x^{-2}y
\end{aligned}\end{equation*}
so that the difference $qcb^{-1}c - ab^{-1}a$ is
\begin{equation}\begin{aligned}\label{computation_RHS}
qcb^{-1}c - ab^{-1}a &= q^2y(qy + q^{-1}y^{-1})^{-1}x^2(y-y^{-1}) \\
&\quad \qquad  -q^2\Lambda\alpha^{-1}(\Lambda)y^{-1}(qy^{-1} + q^{-1}y)^{-1}x^{-2}(y-y^{-1})\\
&\quad \qquad + (q^2+1)\Big(\alpha(\Lambda)(qy + q^{-1}y^{-1})^{-1} + \Lambda(qy^{-1} + q^{-1}y)^{-1}\Big)
\end{aligned}\end{equation}
Meanwhile, using the expression for $ab^{-1}c$ obtained in \eqref{abc}, we find that
\begin{equation}\begin{aligned}\label{computation_LHS}
& (ab^{-1}c - b)(y-y^{-1}) =\\
&\qquad \qquad q^2y(qy + q^{-1}y^{-1})^{-1}x^2(y-y^{-1}) \\
& \qquad \qquad \quad - q^2\Lambda \alpha^{-1}(\Lambda)y^{-1}(q^{-1}y + qy^{-1})^{-1}x^{-2}(y-y^{-1}) \\
&\, \qquad \qquad \quad+ \Big(\alpha(\Lambda)(qy + q^{-1}y^{-1})^{-1}y^{-1} - \Lambda(q^{-1}y + qy^{-1})^{-1}y - y - y^{-1}\Big)(y-y^{-1}) \\
\end{aligned}\end{equation}
Comparing the expressions in \eqref{computation_RHS} and \eqref{computation_LHS} it is immediately clear that the terms involving $x^2$ and $x^{-2}$ are equal.  This leaves just the terms involving only powers of $y$ to check; each of these are elements of $k(y)$ and therefore commutative, so it is now a simple computation to check that both expressions reduce to the form
\begin{equation*}
 \frac{(1+q)(y-y^{-1})(y+y^{-1})(q+q^{-1})}{(qy + q^{-1}y^{-1})(qy^{-1} + q^{-1}y)}.
\end{equation*}
Thus $(ab^{-1}c - b)(y-y^{-1}) = qcb^{-1}c - ab^{-1}a$ as required.  This proves that $y-y^{-1} \in R$.

Inside $k_q(x,y)$ we can notice that
\begin{equation}\begin{aligned}\label{formula_for_x}
 y^{-1}h + q^{-1}g &= (y+y^{-1})^{-1}\left(y^{-1}(x-\Lambda x^{-1}) + q^{-1}(xy + \Lambda x^{-1}y^{-1})\right) \\ 
&= (y+y^{-1})^{-1}\left(y^{-1}x - \Lambda y^{-1}x^{-1} + yx + \Lambda y^{-1}x^{-1}\right) \\ 
&= (y+y^{-1})^{-1}(y^{-1} + y)x \\
&= x.\end{aligned}
\end{equation}
and so $\Lambda x^{-1} = \varphi(y^{-1}h + q^{-1}g) = q^{-1}g - yh$.  Putting these together we get
\[x+\Lambda x^{-1} = (y^{-1}-y)h + 2q^{-1}g \in R\]
and similarly,
\[xy - \Lambda x^{-1}y^{-1} = 2qh + (y-y^{-1})g \in R.\]

$(ii)$ It's clear that $b = y+y^{-1} \not\in R$ since $R$ is a subring of $k_q(x,y)^{\varphi}$ and $b$ is not fixed by $\varphi$.  However, $(y+y^{-1})^2 = (y-y^{-1})^2 + 4$, hence $b^2 \in R$ by (i).

To prove that $R\langle b \rangle  = k_q(x,y)$ it is enough to show that $x, y \in R\langle b \rangle$.  This is now clear, however, since $y = \frac{1}{2}(y-y^{-1}) + \frac{1}{2}(y+y^{-1}) \in R \langle b \rangle$, hence by \eqref{formula_for_x}, $x = y^{-1}h + q^{-1}g \in R\langle b \rangle$ as well.
\end{proof}
Since we are working with fixed rings, the language of Galois theory is a natural choice to use here, and in \cite[\S3.6]{skewfields} we find conditions for a quotient of a general Ore extension $R[u;\gamma, \delta]/(u^2+\lambda u  + \mu)$ to be a quadratic division ring extension of $R$.  (Note that the language of \cite{skewfields} is that of \textit{right} Ore extensions, so we make the necessary adjustments below to apply the results to left extensions.)

When char $k$ $\neq 2$, such an extension will be Galois if and only if $\delta$ is inner \cite[Theorem~3.6.4(i)]{skewfields} so here it is sufficient to only consider the case when $\delta = 0$.  Further, since $b^2 \in R$ by Lemma~\ref{computation_lemma} (ii), we see that $b$ satisfies a quadratic equation over $R$ with $\lambda = 0$, which allows us to simplify matters even further.

The next result is a special case of \cite[Theorem~3.6.1]{skewfields}, which by the above discussion is sufficient for our purposes.
\begin{proposition}\label{gth_prop}
Let $K$ be a skewfield, $\gamma$ an endomorphism on $K$ and $\mu \in K^{\times}$.  The ring $T:= K[u; \gamma]/(u^2+\mu)$ is a quadratic division ring extension of $K$ if and only if T has no zero-divisors and $\mu$, $\gamma$ satisfy the following two conditions:
\begin{enumerate}
\item $\mu r =  \gamma^2(r)\mu$ for all $r \in K$;
\item $\gamma(\mu) = \mu$.
\end{enumerate}
\end{proposition}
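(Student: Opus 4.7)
The plan is to show that conditions (1) and (2) are exactly what is needed for the left ideal $K[u;\gamma](u^2+\mu)$ of the left Ore extension $K[u;\gamma]$ to be two-sided, and then to argue that once $T$ is a well-defined ring, being a division ring is equivalent to being a domain. The forward and backward implications both then reduce to this characterization, since a quadratic division ring extension by definition requires a well-defined quotient ring that is also division.

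The key step is a remainder computation. Using the commutation $ur = \gamma(r)u$ (so $u^2 r = \gamma^2(r)u^2$), for any $r \in K$ I obtain
\[(u^2+\mu)r \;=\; \gamma^2(r)(u^2+\mu) \;+\; \bigl(\mu r - \gamma^2(r)\mu\bigr).\]
The remainder lies in $K$, which meets the left ideal $K[u;\gamma](u^2+\mu)$ only in $0$, so its vanishing is equivalent to condition~(1). Similarly, from $u^3 = u(u^2+\mu) - \gamma(\mu)u$ I get
\[(u^2+\mu)u \;=\; u(u^2+\mu) \;+\; \bigl(\mu - \gamma(\mu)\bigr)u,\]
and again the degree-$\leq 1$ remainder lies in the left ideal iff it is zero, giving condition~(2). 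Since $K \cup \{u\}$ generates $K[u;\gamma]$ as a ring, these two conditions together are precisely what is needed for right multiplication to preserve the left ideal, hence for $(u^2+\mu)$ to be two-sided. Conversely, if $T$ is a quadratic division ring extension, the quotient ring exists, the ideal is two-sided, and the same two remainders must vanish, forcing (1) and (2).

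Once (1) and (2) hold, every element of $T$ has a unique expression $r_0 + r_1 u$ with $r_i \in K$, so $T$ is a $2$-dimensional left $K$-module, and because $\mu \neq 0$ the natural map $K \to T$ is injective. A $2$-dimensional left vector space over a division ring that also has no zero-divisors is automatically a division ring: for any nonzero $t \in T$, left multiplication by $t$ is an injective $K$-linear endomorphism of a finite-dimensional space, hence surjective, yielding a right inverse (and symmetrically a left inverse). This identifies ``division ring'' with ``domain'' in the presence of (1), (2), completing the equivalence.

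The main obstacle is essentially bookkeeping: the cited result \cite[Theorem~3.6.1]{skewfields} is stated for right Ore extensions while we are working with left ones, so one must double-check that the translated version of the commutation relation yields precisely the conditions $\mu r = \gamma^2(r)\mu$ and $\gamma(\mu) = \mu$ (with $\gamma^2$ rather than $\gamma^{-2}$, and so on). Beyond that care with conventions, the argument is a clean computation plus the standard finite-dimensional-domain-is-division-ring observation.
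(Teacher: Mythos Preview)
Your argument is correct, but it takes a different route from the paper.  The paper's proof is a one-line specialization: it quotes Cohn's general criterion \cite[Theorem~3.6.1]{skewfields} for $K[u;\gamma,\delta]/(u^2+\lambda u+\mu)$ to be a quadratic extension (a list of four identities in $\gamma,\delta,\lambda,\mu$), and then simply sets $\delta=0$ and $\lambda=0$ to watch three of the identities vanish and the fourth split into conditions (1) and (2).  You instead reprove the relevant case from scratch, by computing the two remainders that control whether the left ideal $K[u;\gamma](u^2+\mu)$ is two-sided, and then invoking the finite-dimensional domain argument to pass from ``no zero-divisors'' to ``division ring''.  Your approach is more self-contained and makes the role of (1) and (2) transparent; the paper's approach is shorter but relies on having Cohn's theorem to hand.

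One small point to tighten: in the last paragraph you say left multiplication by $t$ is ``$K$-linear'', but left multiplication is \emph{right} $K$-linear, not left $K$-linear, so you need $T$ to be $2$-dimensional as a right $K$-module.  This is fine once (1) holds, since $\gamma^2(r)=\mu r\mu^{-1}$ forces $\gamma^2$ (and hence $\gamma$) to be an automorphism, so $\{1,u\}$ is also a right $K$-basis.  Alternatively, use right multiplication by $t$, which is genuinely left $K$-linear on the left $K$-space $T$, to produce a left inverse; in a domain this is automatically two-sided.
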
 
\begin{proof}
By \cite[Theorem~3.6.1]{skewfields} and replacing right Ore extensions with left, the ring $K[u;\gamma, \delta]/(u^2 + \lambda u + \mu)$ is a quadratic extension of $K$ if and only if it contains no zero divisors and $\gamma$, $\delta$, $\lambda$ and $\mu$ satisfy the equalities
\begin{equation*}\label{gth_conditions_1}\begin{aligned}
\gamma\delta(r) + \delta\gamma(r) &= \gamma^2(r)\lambda - \lambda\gamma(r),\\
\delta^2(r) +\lambda\delta(r) &=  \gamma^2(r)\mu - \mu r, \\
\delta(\lambda) &= \mu - \gamma(\mu) - (\lambda - \gamma(\lambda))\lambda, \\
\delta(\mu) &= (\lambda - \gamma(\lambda))\mu.
\end{aligned}\end{equation*}
Once we impose the conditions $\delta = 0$, $\lambda = 0$ the result follows immediately.
\end{proof}
Viewing $R$ as a subring of $k_q(x,y)$, we can set $u = b$, $\mu = -b^2$.  The following choice of $\gamma$ is suggested by \cite{division}.
\begin{lemma}\label{gamma_def}
Let $b$, $h$ and $g$ be as defined in \eqref{abc_defs}, and $R$ the division ring generated by $h$ and $g$ inside $k_q(x,y)$.  Then the conjugation map defined by
\begin{equation*}
\gamma(r) = brb^{-1} \quad \forall r \in R
\end{equation*}
is a well-defined automorphism on $R$.
\end{lemma}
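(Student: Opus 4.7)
The conjugation map $\gamma(r) = brb^{-1}$ is automatically a well-defined automorphism of the ambient ring $k_q(x,y)$, so the content of the lemma is that $\gamma$ restricts to an automorphism of the subring $R$.  I would split the proof into two parts: (a) show that $\gamma(R) \subseteq R$; (b) show that $\gamma|_R$ is bijective.

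Part (b) is essentially a free consequence of part (a) combined with Lemma~\ref{computation_lemma}(ii).  A direct computation gives $\gamma^2(r) = b^2 r b^{-2}$, so $\gamma^2 = c_{b^2}$; since $b^2 \in R$, this is an inner automorphism of $R$ and in particular a bijection on $R$.  Hence once (a) is known, $\gamma|_R$ is a ring endomorphism of a division ring (so injective) whose square is surjective on $R$, forcing $\gamma|_R$ itself to be surjective.

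Since $R$ is generated as a division ring by $h$ and $g$, part (a) reduces to showing $\gamma(h), \gamma(g) \in R$.  Writing $\gamma(h) = ab^{-1}$ and $\gamma(g) = cb^{-1}$, my plan is to push $b^{-1}$ past $x^{\pm 1}$ using the commutation rule $xf(y) = f(qy)x$, producing expressions for $\gamma(h)$ and $\gamma(g)$ as $k(y)$-linear combinations of $x$ and $\Lambda x^{-1}$.  I would then substitute the identities
\[x = y^{-1}h + q^{-1}g, \qquad \Lambda x^{-1} = -yh + q^{-1}g,\]
which were derived during the proof of Lemma~\ref{computation_lemma}(i), to obtain expressions
\[\gamma(h) = f_1(y)\, h + f_2(y)\, g, \qquad \gamma(g) = f_3(y)\, h + f_4(y)\, g,\]
with coefficients $f_i \in k(y)$.

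The main obstacle is verifying that each $f_i$ actually lies in the $\varphi$-fixed subfield $k(y)^{\varphi} = k(y - y^{-1})$.  After combining the relevant fractions over the common denominator $(qy + q^{-1}y^{-1})(q^{-1}y + qy^{-1}) = y^2 + y^{-2} + q^2 + q^{-2}$, which is already visibly $\varphi$-invariant, the various numerators should collapse to polynomials in $y^2 + y^{-2}$ and $y - y^{-1}$; both of these are $\varphi$-fixed, so each $f_i$ lies in $k(y - y^{-1})$.  Since $y - y^{-1} \in R$ by Lemma~\ref{computation_lemma}(i), the inclusion $k(y - y^{-1}) \subseteq R$ then puts $\gamma(h)$ and $\gamma(g)$ in $R$, completing part (a).  The bookkeeping in these computations is the only real work: one must keep careful track of how each factor of $(y+y^{-1})^{-1}$ deforms when passed across $x^{\pm 1}$, and then verify that the shifts in powers of $q$ conspire to produce $\varphi$-invariant rather than merely $k(y)$-valued coefficients.
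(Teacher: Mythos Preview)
Your proposal is correct, and it differs from the paper's argument in both halves.

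For part~(a), the paper avoids your $\varphi$-invariance bookkeeping by writing $\gamma(h) = (ab)b^{-2}$ and $\gamma(g) = (cb)b^{-2}$ instead of $ab^{-1}$ and $cb^{-1}$.  Since $b^2 \in R$ by Lemma~\ref{computation_lemma}(ii), it suffices to show $ab, cb \in R$, and these expand directly as
\[
ab = 2(xy - \Lambda x^{-1}y^{-1}) - (x+\Lambda x^{-1})(y - y^{-1}), \qquad cb = (xy - \Lambda x^{-1}y^{-1})(y - y^{-1}) + 2(x+\Lambda x^{-1}),
\]
which lie in $R$ by Lemma~\ref{computation_lemma}(i).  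This uses all three auxiliary elements of that lemma rather than just $y-y^{-1}$, and sidesteps any need to verify that rational functions of $y$ land in $k(y-y^{-1})$.  Your route works too---the coefficients $f_i$ are indeed $\varphi$-fixed, as one can check by applying $y \mapsto -y^{-1}$ termwise---but the paper's factorisation through $b^{-2}$ is tidier.

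For part~(b), the paper simply observes that the same $ab$, $cb$, $b^{-2}$ also give $\gamma^{-1}(h) = b^{-2}(ab)$ and $\gamma^{-1}(g) = b^{-2}(cb)$, so both $\gamma$ and $\gamma^{-1}$ send generators into $R$.  Your argument---that $\gamma^2 = c_{b^2}$ is inner on $R$, hence surjective, forcing $\gamma$ itself to be surjective---is a nice alternative that avoids repeating the computation for $\gamma^{-1}$.
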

\begin{proof}
It is sufficient to check that the images of the generators of $R$ under $\gamma$ and $\gamma^{-1}$ are themselves in $R$, i.e. that
\begin{align*}
\gamma(h) &= (ab)b^{-2} &\gamma(g) = (cb)b^{-2} \\
 \gamma^{-1}(h) &= b^{-2}(ab) &\gamma^{-1}(g) = b^{-2}(cb)
\end{align*}
are all in $R$.

By Lemma \ref{computation_lemma} (ii) we already know that $b^2  \in R$.  As for $ab$ and $cb$, they decompose into elements of $R$ as follows:
\begin{equation}\begin{aligned}\label{formulas_cb}
 ab &= (x-\Lambda x^{-1})(y+y^{-1}) \\
&= xy + xy^{-1} - \Lambda x^{-1}y - \Lambda x^{-1}y^{-1} \\
&= 2(xy - \Lambda x^{-1}y^{-1}) - (x+\Lambda x^{-1})(y-y^{-1})\in R\\
&\\
 cb &= (xy + \Lambda x^{-1}y^{-1})(y+y^{-1}) \\
&= xy^2 + x + \Lambda x^{-1} + \Lambda x^{-1}y^{-2} \\
&= (xy - \Lambda x^{-1}y^{-1})(y-y^{-1}) + 2(x+\Lambda x^{-1}) \in R
\end{aligned}\end{equation}
by Lemma~\ref{computation_lemma} (i).  Therefore $\gamma$ is a well-defined bijection on $R$, and since conjugation respects the relation $hg - qgh = 1-q$, it is an automorphism on $R$.\end{proof}
We are now in a position to prove Theorem~\ref{epic_theorem}.  

Recall that $R \subseteq k_q(x,y)^G$ is a division ring with generators $h$ and $g$, which satisfy a quantum Weyl relation $hg - qgh = 1-q$.  We can make a change of variables $h \mapsto \frac{1}{1-q}(hg-gh)$ so that $R$ has the structure of a $q$-division ring \cite[Proposition~3.2]{AD2}.  (The only exception is when $q=1$, where this change of variables does not make sense; however, since $h$ and $g$ already ``$q$-commute'' in this case we can simply set $f:=h$.)

Define the automorphism $\gamma$ as in Lemma~\ref{gamma_def} and set $\mu := -b^2 \in R$.  The extension $L :=R[b;\gamma]/(b^2+\mu)$ is a subring of the division ring $k_q(x,y)$, and therefore has no zero divisors.  Further,
\[ \gamma^2(r) \mu = -(b^2rb^{-2})b^2 = -b^2r = \mu r \quad \forall r \in R\]
and similarly $\gamma(\mu) = \mu$.  Therefore by Proposition~\ref{gth_prop}, $L$ is a quadratic extension of $R$.  Since it is a subring of $k_q(x,y)$ containing both $R$ and $b$, by Lemma~\ref{computation_lemma} (ii) we can conclude that $L = k_q(x,y)$.

Now since $R \subseteq k_q(x,y)^G \subsetneq k_q(x,y) = L$, and the extension $R \subset L$ has degree 2, we must have $R = k_q(x,y)^G$ and Theorem~\ref{epic_theorem} is proved.

\section{Further fixed rings}\label{more fixed rings}
Theorem~\ref{epic_theorem} came about as a result of a related question, namely: if we take an automorphism of finite order defined on $k_q[x^{\pm1},y^{\pm1}]$ and extend it to $k_q(x,y)$, what does its fixed ring look like?  

As discussed in \cite[\S4.1.1]{dumas_invariants}, the automorphism group of $k_q[x^{\pm1},y^{\pm1}]$ is generated by automorphisms of scalar multiplication and the monomial automorphisms (see Definition~\ref{def_monomial_autos} below).  Since the case of scalar multiplication has been covered in Theorem~\ref{AD_prop}, in this section we will focus on monomial automorphisms with the aim of proving Theorem~\ref{thm_monomial_results}.

For the remainder of this section $q\in k^{\times}$ can still be any non-zero scalar, but we will assume that $k$ contains a square root of $q$, denoted by $\hat{q}$.  

\begin{definition}\label{def_monomial_autos} We call an automorphism of $k_q[x^{\pm1},y^{\pm1}]$ or $k_q(x,y)$ a \textit{monomial automorphism} if it can be represented as an element of $SL_2(\mathbb{Z})$ as follows: for $g = \left(\begin{smallmatrix}a&b\\c&d\end{smallmatrix}\right) \in SL_2(\mathbb{Z})$,
\[g.y =  \hat{q}^{ac}y^ax^c, \quad g.x =  \hat{q}^{bd}y^bx^d \quad a,b,c,d \in \mathbb{Z}\]
(see \cite[\S1.3]{Baudry}, with the roles of $x$ and $y$ exchanged).  
\end{definition}
It is well known that up to conjugation, $SL_2(\mathbb{Z})$ has only four non-trivial finite subgroups: the cyclic groups of orders 2, 3, 4 and 6 (see, for example, \cite[\S1.10.1]{Lorenz}).   Table~\ref{table_of_maps} lists conjugacy class representatives for each of these groups, and we will use the same symbols to refer to both these automorphisms and their extensions to $D$.
\begin{table}
\centering
\begin{tabular}[h]{c|rl}
Order & \multicolumn{2}{c}{Automorphism} \\ \hline
2 & $\tau:$&$ x \mapsto x^{-1},\  y \mapsto y^{-1}$ \\
3 & $\sigma:$&$ x \mapsto y, \ y \mapsto \hat{q}y^{-1}x^{-1}$ \\
4 & $\rho:$&$ x \mapsto y^{-1}, \ y \mapsto x$ \\
6 & $\eta:$&$ x \mapsto y^{-1}, \ y \mapsto \hat{q}yx$
\end{tabular}
\caption{Conjugacy class representatives of finite order monomial automorphisms on $k_q[x^{\pm1},y^{\pm1}]$.}\label{table_of_maps}
\end{table}

As noted in \cite[\S1.3]{Baudry}, it is sufficient to consider the fixed rings for one representative of each conjugacy class. We will therefore approach Theorem~\ref{thm_monomial_results} by examining the fixed rings of $D$ under each of the automorphisms in Table~\ref{table_of_maps} in turn.

By Theorem~\ref{order_2_monomial_result}, we already know that $D^{\tau} \cong D$.  This is proved by methods from noncommutative algebraic geometry in \cite[\S13.6]{SV1}, but the authors also provide a pair of $q$-commuting generators for $D^{\tau}$, namely
\begin{equation}\label{order_2_gens}
 u = (x-x^{-1})(y^{-1}-y)^{-1}, \quad v = (xy - x^{-1}y^{-1})(y^{-1}-y)^{-1}.
\end{equation}
We can use this and Theorem~\ref{epic_theorem} to check that the fixed ring of $D$ under an order 4 monomial automorphism is again isomorphic to $D$.
\begin{theorem}\label{order_4_thm}
 Let $\rho$ be the order 4 automorphism on $D$ defined by
\[\rho: x \mapsto y^{-1}, \quad y \mapsto x.\]
Then $D^{\rho} \cong D$ as $k$-algebras.
\end{theorem}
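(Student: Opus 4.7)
The crucial observation is that $\rho^2 = \tau$, the order-$2$ monomial automorphism of Theorem~\ref{order_2_monomial_result}. Consequently $\langle\tau\rangle$ is a normal subgroup of $\langle\rho\rangle$ of index $2$, and
\[D^\rho = (D^\tau)^{\bar\rho},\]
where $\bar\rho$ denotes the order-$2$ automorphism of $D^\tau$ induced by $\rho$.  The plan is to recognize this as an instance of the fixed ring already computed in Theorem~\ref{epic_theorem}.

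By Theorem~\ref{order_2_monomial_result}, $D^\tau$ is isomorphic to $D$ via the pair of $q$-commuting generators $u,v$ displayed in \eqref{order_2_gens}.  Using $\rho(x)=y^{-1}$, $\rho(y)=x$, together with the identity $u^{-1}=(y^{-1}-y)(x-x^{-1})^{-1}$, a short direct calculation gives
\[\bar\rho(u) = -u^{-1},\]
which matches the formula $\varphi(y)=-y^{-1}$ of Theorem~\ref{epic_theorem} under the identification $u\leftrightarrow y$.  A second, longer calculation deals with $\bar\rho(v)$: substituting $\rho$ into the definition of $v$ and using $y^{-1}x = qxy^{-1}$, $yx^{-1}=qx^{-1}y$ gives
\[\bar\rho(v) = -q(xy^{-1}-x^{-1}y)(x-x^{-1})^{-1},\]
and the task is then to re-express this Laurent expression in $x,y$ in terms of the more complicated generators $u,v$ in the form $(u^{-1}-q^{-1}u)v^{-1}$, mirroring $\varphi(x) = (y^{-1}-q^{-1}y)x^{-1}$ under $v\leftrightarrow x$.

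Once both formulas are verified, the pair $(D^\tau,\bar\rho)$ is isomorphic, as a $k$-algebra-with-automorphism and possibly after a harmless rescaling of the generators, to the pair $(D,\varphi)$ of Theorem~\ref{epic_theorem}.  Applying that theorem then immediately yields
\[D^\rho = (D^\tau)^{\bar\rho} \cong D^{\langle\varphi\rangle} \cong D,\]
as required.

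The main obstacle is the second of the two calculations above.  The expression for $\bar\rho(v)$ coming out of the definition lives naturally in $x,y$, whereas the target is a rational expression in $u,v$; converting between the two requires careful bookkeeping as the factors of $(y^{-1}-y)^{-1}$ implicit in $u^{\pm 1}$ and $v^{\pm 1}$ are commuted past $x$ and $x^{-1}$, along with tracking signs and powers of $q$.  Since the target form is dictated by the desired match with $\varphi$, however, the computation is routine and follows the same template as those already carried out in Section~\ref{fixed ring}.
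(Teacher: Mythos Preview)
Your strategy is exactly the one the paper uses: reduce to $D^\tau$ via $\rho^2=\tau$, compute the induced action of $\rho$ on the generators $u,v$ of \eqref{order_2_gens}, and recognise the result as an instance of $\varphi$ so that Theorem~\ref{epic_theorem} applies.

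There is one genuine slip in the details, however.  Under your identification $u\leftrightarrow y$, $v\leftrightarrow x$, the relation $uv=qvu$ in $D^\tau$ transports to $yx=qxy$, which is the relation of $k_{q^{-1}}(x,y)$, not $k_q(x,y)$.  Correspondingly, the correct target for the second computation is
\[
\bar\rho(v)=(u^{-1}-qu)\,v^{-1},
\]
not $(u^{-1}-q^{-1}u)v^{-1}$: this is precisely $\varphi$ applied in $k_{q^{-1}}(v,u)$ (where the parameter is $q^{-1}$, so the coefficient $q^{-1}$ in the definition of $\varphi$ becomes $q$).  A ``harmless rescaling of the generators'' cannot repair this, since replacing $u,v$ by scalar multiples leaves the commutation parameter $q$ unchanged.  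What is actually needed---and what the paper invokes explicitly---is the isomorphism $D_q\cong D_{q^{-1}}$ of Theorem~\ref{AD_prop}(i), which lets you transport $\varphi$ and Theorem~\ref{epic_theorem} from $k_q$ to $k_{q^{-1}}$.  With that one extra ingredient your argument is complete and coincides with the paper's.
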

\begin{proof}
We can first notice that $\rho^2 = \tau$, so the fixed ring $D^{\rho}$ is a subring of $D^{\tau}$.  By \cite[\S13.6]{SV1}, $D^{\tau} = k_q(u,v)$ with $u,v$ as in \eqref{order_2_gens}, so it is sufficient to consider the action of $\rho$ on $u$ and $v$.  By direct computation, we find that
\begin{equation*}
 \begin{aligned}
  \rho(u) &= (y^{-1} - y)(x^{-1} - x)^{-1} = -u^{-1}\\
\rho(v) &= (y^{-1}x - yx^{-1})(x^{-1} - x)^{-1} = (u^{-1} - qu)v^{-1}
 \end{aligned}
\end{equation*}
i.e. $\rho$ acts as $\varphi$ from \eqref{def_varphi} on $k_{q^{-1}}(v,u)$, which by Theorem~\ref{AD_prop} is isomorphic to $k_q(u,v)$.  Now by Theorem~\ref{epic_theorem}, $D^{\rho} \cong D^{\varphi} \cong D$.\end{proof}

We now turn our attention to the fixed ring of $D$ under the order 3 automorphism $\sigma$ defined in Table~\ref{table_of_maps}, where matters become significantly more complicated.  Attempting to construct generators by direct analogy to the previous cases fails, and computations become far more difficult as both $x$ and $y$ appear in the denominator of any potential generator.  While the same theorem can be proved for this case, our chosen generators are unfortunately quite unintuitive.

For the following results, we will assume that $k$ contains a primitive third root of unity, denoted $\omega$.  As with Theorem~\ref{epic_theorem}, we define certain elements which are fixed by $\sigma$ or are acted upon as multiplication by a power of $\omega$.  We set
\begin{equation}\label{order_3_building_blocks}\begin{aligned}
 a &= x + \omega y + \omega^2 \hat{q}y^{-1}x^{-1} \\
b &= x^{-1} + \omega y^{-1} + \omega^2 \hat{q}yx \\
c &= y^{-1}x + \omega \hat{q}^3y^2x + \omega^2\hat{q}^3y^{-1}x^{-2} \\
\pa &= x + y + \hat{q}y^{-1}x^{-1} \\
\pb &= x^{-1} + y^{-1} + \hat{q} yx \\
\pc &= y^{-1}x + \hat{q}^3y^2x + \hat{q}^3y^{-1}x^{-2}
\end{aligned}\end{equation}
The elements $\pa$, $\pb$ and $\pc$ are fixed by $\sigma$, while $\sigma$ acts on $a$, $b$ and $c$ as multiplication by $\omega^2$.  We can further define
\begin{equation}\label{order_3_gens}\begin{aligned}
 g &= a^{-1}b \\
 f &= \pb - \omega^2\pa g + (\omega^2-\omega)\hat{q}^{-1}(\omega^2g^2 + \hat{q}^2g^{-1})
\end{aligned}\end{equation}
\begin{proposition}\label{order_3_q_comm_proof}
Let $k$ be a field of characteristic 0 that contains a primitive third root of unity $\omega$ and a square root of $q$, denoted by $\hat{q}$.  The elements $f$ and $g$ in \eqref{order_3_gens} are fixed by $\sigma$ and satisfy $fg = qgf$.
\end{proposition}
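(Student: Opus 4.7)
The proof splits into the two assertions. The $\sigma$-invariance is short: direct substitution using $\sigma(x)=y$, $\sigma(y)=\hat q y^{-1}x^{-1}$ shows that $\sigma$ cyclically permutes the three summands of each of $\pa,\pb,\pc$, so these are fixed; applied to $a$, $b$, $c$, the same cyclic permutation combined with the weights $1,\omega,\omega^2$ built into their definitions produces an overall scalar factor of $\omega^2$. Therefore $\sigma(g)=(\omega^2 a)^{-1}(\omega^2 b)=g$, and $f$ is fixed as a $k$-polynomial expression in the fixed elements $\pa$, $\pb$ and $g$.

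For the commutation $fg=qgf$, I would first expand via the definition of $f$ in \eqref{order_3_gens}, which after multiplying out gives
\begin{equation*}
fg-qgf = (\pb g - qg\pb) - \omega^2(\pa g^2 - qg\pa g) + (\omega^2-\omega)(1-q)\hat q^{-1}\bigl(\omega^2 g^3 + \hat q^2\bigr).
\end{equation*}
So the task reduces to computing two twisted commutators, $\pb g - q g\pb$ and $\pa g^2 - q g\pa g$, together with a cubic identity for $g$. I would obtain each of these by first working out the products $\pa a, \pa b, \pb a, \pb b$ directly in $k_q[x^{\pm1},y^{\pm1}]$ using only the relation $xy=qyx$, and then re-expressing the results on the basis $\{a,b,c,\pa,\pb,\pc,1\}$; this is exactly the role played by the auxiliary element $c$ in \eqref{order_3_building_blocks}, which would otherwise look unmotivated. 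Left-multiplication by $a^{-1}$ then translates those polynomial identities into the relations for $g=a^{-1}b$ needed above.

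The cleanest book-keeping is to avoid inverses altogether: multiplying the proposed identity $fg-qgf=0$ on the left and right by suitable powers of $a$ converts it into a polynomial equation among $a,b,c,\pa,\pb,\pc$ in $k_q[x^{\pm1},y^{\pm1}]$, which can in principle be checked coefficient by coefficient in the monomial basis $\{y^ix^j\}$. The main obstacle is the bulk and delicacy of this expansion: several Laurent polynomials, each with multiple $\omega$- and $\hat q$-weighted terms, must be multiplied together, and the cancellations are far from obvious a priori. The explicit factor of $(1-q)$ in the last two terms of the displayed expression is a useful guide — the same factor should emerge from each of the two twisted commutators, and matching it to the correct $\omega$- and $\hat q$-weights is where the combinatorial care lies.
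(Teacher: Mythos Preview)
Your plan is essentially the paper's own proof. The paper first records exactly the four polynomial identities you propose to compute (written there as $a\pa,\ a\pb,\ \pa b,\ \pb b$), re-expresses them with $c$ appearing as the ``correction'' term, and then left-multiplies by $a^{-1}$ to obtain closed formulas for $g\pa$ and $g\pb$; it then expands $\hat q^{2}gf$ directly and checks it equals $fg$. Your expansion of $fg-qgf$ into the two twisted commutators plus the residual $(\omega^2-\omega)(1-q)\hat q^{-1}(\omega^2 g^3+\hat q^2)$ is the same computation reorganised.

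One small clarification: you will not actually need a separate ``cubic identity for $g$''. When you carry out the two twisted commutator computations using the $g\pa$, $g\pb$ formulas, the $a^{-1}c$ terms cancel between them (this is the real reason the auxiliary $c$ is harmless), and what survives is precisely $-(\omega^2-\omega)(1-q)\hat q^{-1}(\omega^2 g^3+\hat q^2)$, cancelling your third term on the nose. So the verification closes without any further relation among powers of $g$; your alternative of clearing denominators by powers of $a$ is unnecessary, though of course valid.
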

\begin{proof}
 As always the first statement is clear: $\sigma$ acts on $a$ and $b$ by $\omega^2$ and therefore fixes $g$, and since $\pa$ and $\pb$ are already fixed by $\sigma$ we can now see that $\sigma(f) = f$.

To verify the second statement, we need to understand how $g$ interacts with $\pa$ and $\pb$.  Simple multiplication of polynomials yields the identities
\begin{align*}
 a\pa &=\pa a + (\omega-\omega^2)(\hat{q}-\hat{q}^{-1})b \\
a \pb  &= \hat{q}^2\pb a + (\hat{q}^{-2} - \hat{q}^2)c \\
\pa b &= \hat{q}^2b\pa  + \omega(\hat{q}^{-2} - \hat{q}^2)c \\
\pb b &= b\pb  + (\omega^2-\omega)(\hat{q}-\hat{q}^{-1})a
\end{align*}
and hence
\begin{align*}
 g\pa  &= \hat{q}^{-2}\pa g - \hat{q}^{-2}\omega(\hat{q}^{-2}-\hat{q}^2)a^{-1}c - (\omega-\omega^2)\hat{q}^{-2}(\hat{q}-\hat{q}^{-1})g^2 \\
g \pb  &= \hat{q}^{-2}\pb g - (\omega^2-\omega)(\hat{q}-\hat{q}^{-1}) - \hat{q}^{-2}(\hat{q}^{-2} - \hat{q}^2)a^{-1}cg
\end{align*}
since $g = a^{-1}b$.

Now by direct computation, we find that
\begin{align*}
 \hat{q}^2gf &= \hat{q}^2g\pb - \hat{q}^2\omega^2g\pa g + (w^2-w)\hat{q}(\omega^2g^3 + \hat{q}^2)\\
&= \pb g - (\omega^2-\omega)(\hat{q}-\hat{q}^{-1})\hat{q}^2 - (\hat{q}^{-2} - \hat{q}^2)a^{-1}cg \\
& \qquad - \omega^2\pa g^2 + (\hat{q}^{-2}-\hat{q}^2)a^{-1}cg + \omega^2(\omega-\omega^2)(\hat{q}-\hat{q}^{-1})g^3 \\
& \qquad + (\omega^2-\omega)\hat{q}(\omega^2g^3 + \hat{q}^2) \\
&= \pb g - \omega^2 \pa g^2 + (\omega^2-\omega)\hat{q}^{-1}(\omega^2g^3 + \hat{q}^2)\\
&=fg\qedhere
\end{align*}
\end{proof}
\begin{theorem}\label{epic_theorem_2}
Let $k$, $f$ and $g$ be as in Proposition~\ref{order_3_q_comm_proof}.  Then the division ring $k_q(f,g)$ generated by $f$ and $g$ over $k$ is equal to the fixed ring $D^{\sigma}$, and hence $D^{\sigma} \cong D$ as $k$-algebras.
\end{theorem}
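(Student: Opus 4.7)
The plan is to mimic the strategy from Theorem~\ref{epic_theorem}. Let $R = k_q(f, g)$, which is a subring of $D^\sigma$ by Proposition~\ref{order_3_q_comm_proof}; since $f$ and $g$ $q$-commute and generate $R$ as a division ring, $R \cong D$ as $k$-algebras is automatic. The remaining task is to establish the equality $R = D^\sigma$, which we will do by showing $[D:R] = 3$ and invoking a tower argument.

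The first step is to enlarge our supply of $\sigma$-fixed elements known to lie in $R$. By inverting the defining formula \eqref{order_3_gens} for $f$, and combining with the commutation identities between $a, b, c$ and $\pa, \pb, \pc$ already derived in the proof of Proposition~\ref{order_3_q_comm_proof}, one should extract $\pa$, $\pb$ (and by an analogous calculation $\pc$) as elements of $R$. Next, playing the role of the quadratic element $b = y + y^{-1}$ from Lemma~\ref{computation_lemma}(ii), the natural candidate here is $a$ itself: the identity $\sigma(a) = \omega^2 a$ forces $a^3 \in D^\sigma$, and a direct expansion should produce an explicit formula for $a^3$ in terms of $\pa, \pb, \pc, f$ and $g$, showing that $a^3 \in R$. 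Finally, by inverting the definitions \eqref{order_3_building_blocks}, the generators $x$ and $y$ can be expressed as $R$-linear combinations of $1, a, a^2$, yielding $D = R + Ra + Ra^2$.

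The tower argument then closes the proof. Since $R$, $Ra$, $Ra^2$ lie in the distinct $\sigma$-eigenspaces with eigenvalues $1, \omega^2, \omega$ respectively, their sum is direct, so $[D:R] = 3$ as a left $R$-vector space. From the chain $R \subseteq D^\sigma \subsetneq D$, the tower formula $[D:R] = [D:D^\sigma][D^\sigma:R]$ forces $[D:D^\sigma]$ to divide $3$, and the strict containment $D^\sigma \subsetneq D$ forces this index to equal $3$; hence $[D^\sigma:R]=1$ and $R = D^\sigma$.

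The main obstacle will be computational volume. Unlike in Theorem~\ref{epic_theorem}, the elements $\pa, \pb, \pc, a, b, c$ satisfy a much larger web of commutation identities, and extracting the fixed elements $\pa, \pb, \pc$ and the cubic $a^3$ from expressions in $f$ and $g$ will require careful, lengthy bookkeeping. The structural symmetry behind the choice of $f$ and $g$ in \eqref{order_3_gens}, together with the $\omega$-symmetry of the building blocks in \eqref{order_3_building_blocks}, should be essential for keeping the calculation tractable.
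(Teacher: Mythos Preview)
Your plan departs from the paper's argument, and the departure introduces a real gap. The paper does \emph{not} run a cubic analogue of the degree-2 tower argument from Theorem~\ref{epic_theorem}. Instead, once $\theta_1\in R$ is established (by a Magma-verified identity), it deduces $\theta_2,\theta_3\in R$ and then invokes two external results: Baudry's description of a finite generating set for $k_q[x^{\pm1},y^{\pm1}]^{\sigma}$, showing that this Laurent-polynomial fixed ring is contained in $R$; and Faith's theorem that $Q\big(k_q[x^{\pm1},y^{\pm1}]^{\sigma}\big)=D^{\sigma}$, giving $D^{\sigma}\subseteq R$ and hence equality. No extension $R\subset R\langle a\rangle$ is built.

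Both routes share the genuinely hard step, namely exhibiting $\theta_1$ as an explicit element of $k_q(f,g)$. Where your proposal runs into trouble is step~3. Writing $x=\tfrac13(\theta_1+a+\bar a)$ with $\bar a:=x+\omega^2 y+\omega\hat q\,y^{-1}x^{-1}$, one sees that ``inverting the definitions'' requires the third eigenvector $\bar a$, which lies in the $\omega$-eigenspace; to place $x$ in $R+Ra+Ra^2$ you must show $\bar a\,a^{-2}\in R$ (or an equivalent $\sigma$-fixed ratio). That is a further explicit computation of the same order of difficulty as $\theta_1\in R$, not a consequence of the identities already in hand. There is a second unaddressed point: for $R+Ra+Ra^2$ to be a ring (so that containing $x,y$ forces it to equal $D$), you need the cubic analogue of Lemma~\ref{gamma_def}, namely $aRa^{-1}\subseteq R$; without it, $R+Ra+Ra^2$ is only a left $R$-module and knowing $x,y$ lie in it tells you nothing about products. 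In fact, since $D=D^{\sigma}\oplus D^{\sigma}a\oplus D^{\sigma}a^{2}$ as left $D^{\sigma}$-modules, the equality $D=R+Ra+Ra^{2}$ is \emph{equivalent} to $R=D^{\sigma}$, so it cannot be obtained by soft arguments.

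In short: your strategy is not wrong in principle, but the proposal underestimates step~3, which would demand at least as much computation as the paper's route while forgoing the clean Baudry/Faith shortcut that finishes the argument immediately after $\theta_1,\theta_2,\theta_3\in R$.
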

\begin{proof}
We claim that it suffices to prove $k_q[x^{\pm1},y^{\pm1}]^{\sigma} \subset k_q(f,g)$.  Indeed, $k_q[x^{\pm1},y^{\pm1}]$ is a Noetherian domain, and therefore both left and right Ore, while $\langle \sigma \rangle$ is a finite group.  We can therefore apply \cite[Theorem~1]{faith} to see that
\[Q\big(k_q[x^{\pm1},y^{\pm1}]^{\sigma}\big) = k_q(x,y)^{\sigma}\]
where $Q(R)$ denotes the full ring of fractions of a ring $R$.  Hence if $k_q[x^{\pm1},y^{\pm1}]^{\sigma} \subset k_q(f,g)$, we see that
\[Q(k_q[x^{\pm1},y^{\pm1}]^{\sigma}) \subseteq k_q(f,g) \subseteq k_q(x,y)^{\sigma} \Rightarrow k_q(f,g) = k_q(x,y)^{\sigma}.\]

We will show that $k_q[x^{\pm1}, y^{\pm1}]^{\sigma}$ is generated as an algebra by the elements $\pa$, $\pb$ and $\pc$ from \eqref{order_3_building_blocks}, and then check that these three elements are in $k_q(f,g)$. 

By \cite[Th\'eor\`eme~2.1]{Baudry}, $k_q[x^{\pm1}, y^{\pm1}]^{\sigma}$ is generated as a Lie algebra with respect to the commutation bracket by seven elements:
\begin{gather*}
R_{0,0} = 1, \quad R_{1,0} = x+ y + \hat{q}y^{-1}x^{-1}, \quad R_{1,1} = x^{-1} + y^{-1} + \hat{q}yx, \\
R_{1,2} = y^{-1}x + \hat{q}^3y^2x + \hat{q}^3y^{-1}x^{-2}, \quad R_{1,3} = y^{-1}x^2 + \hat{q}^5y^3x + \hat{q}^8y^{-2}x^{-3}, \\
R_{2,0} = x^2+y^2+\hat{q}^4y^{-2}x^{-2}, \quad R_{3,0} = x^3 + y^3 + \hat{q}^9y^{-3}x^{-3}.
\end{gather*}
and so it is also generated as a $k$-algebra by these elements.  $R_{1,0}$, $R_{1,1}$ and $R_{1,2}$ are precisely the aforementioned elements $\pa$, $\pb$ and $\pc$, and it is a simple computation to verify that $R_{1,3}$, $R_{2,0}$ and $R_{3,0}$ are in the algebra generated by these three. 

It is clear from the definition of $f$ that once we have found either $\pa$ or $\pb$ in $k_q(f,g)$ we get the other one for free, and we can also observe that
\[\pa \pb - \hat{q}^2 \pb \pa = (\hat{q}^{-2} - \hat{q}^2) \pc - 3\hat{q}^2 + 3 \in k_q(f,g)\]
so $\pc \in k_q(f,g)$ follows from $\pa$, $\pb \in k_q(f,g)$.  Unfortunately there seems to be no easy way to make the first step, i.e. verify that either $\pa$ or $\pb$ is in $k_q(f,g)$. 

In fact, the element $\pa$ can be written in terms of $f$ and $g$ as in the following equality; this is the result of a long and tedious calculation, and was verified using the computer algebra system Magma (v2.18).  We find that
\begin{gather*}
\pa = (\omega - \omega^2)^{-1}\hat{q}^{-2}g^{-1}f + (\omega^2\hat{q} + \omega\hat{q}^{-1})g + (\hat{q} + \hat{q}^{-1})g^{-2}  \\
\qquad \qquad    + (\omega - \omega^2)\Big(\hat{q}^{-2}g^3 + (\hat{q}^2+1) + \hat{q}^4g^{-3}\Big)f^{-1}.
\end{gather*}
Therefore $\pa \in k_q(f,g)$, and the result now follows.
\end{proof}
\begin{remark}
By analogy to the pairs of generators in \eqref{order_2_gens} and Theorem~\ref{epic_theorem}, we might hope to find similarly intuitive generators for $k_q(x,y)^{\sigma}$.  Having set $g := a^{-1}b$ as in the proof above, computation in Magma shows that there does exist a left fraction $f' \in k_q(x,y)^{\sigma}$ such that $f'g = qgf'$; unfortunately, $f'$ takes 9 pages to write down.  In the interest of brevity, we chose to use the less intuitive $f$ defined in \eqref{order_3_gens} instead.
\end{remark}

In a similar manner to Theorem~\ref{order_4_thm}, we can now describe the one remaining fixed ring $D^{\eta}$ using our knowledge of the fixed rings with respect to monomial maps of order 2 and 3.
\begin{theorem}\label{order_6_thm}
Let $\eta$ be the order 6 map defined in Table~\ref{table_of_maps}, and suppose $k$ contains a primitive third root of unity and both a second and third root of $q$.  Then $ D^{\eta} \cong D$ as $k$-algebras.
\end{theorem}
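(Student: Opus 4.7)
The strategy is to mimic Theorem~\ref{order_4_thm}: reduce the order-6 problem to a fixed-ring computation already settled in this paper.

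Direct computation on the generators $x,y$, using $\hat q^2=q$, shows that $\eta^3=\tau$ and $\eta^2=\sigma^{-1}$; one also checks that $\sigma$ and $\tau$ commute. Since their orders are coprime, the cyclic group $\langle\eta\rangle$ decomposes as $\langle\sigma\rangle\times\langle\tau\rangle$, and consequently
\[ D^{\eta}=(D^{\sigma})^{\tau}. \]
By Theorem~\ref{epic_theorem_2}, $D^{\sigma}$ is the $q$-division ring $k_q(f,g)$ generated by the $q$-commuting pair $f,g$ of \eqref{order_3_gens}. Since $\sigma$ fixes $D^{\sigma}$ pointwise, $\tau$ restricts to an involution of $k_q(f,g)$, and it remains to identify this involution and compute its fixed ring.

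Next, I would compute $\tau$ on the building blocks in \eqref{order_3_building_blocks}. A short calculation gives $\tau(a)=b$, $\tau(b)=a$, $\tau(\pa)=\pb$ and $\tau(\pb)=\pa$, from which $\tau(g)=g^{-1}$ follows immediately, together with an explicit expression for $\tau(f)$ in $k_q(f,g)$. Because $\tau^2=1$ and $fg=qgf$, a standard manipulation shows that $\tau(f)$ must have the form $\tau(f)=\psi(g)\, f^{-1}$ for some $\psi\in k(g)$ satisfying $\psi(g^{-1})=\psi(qg)$, and the explicit calculation pins down $\psi$.

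To finish, one recognises this induced involution: if $\psi$ is a non-zero scalar, then (after rescaling $f$) the involution coincides with the monomial $\tau$ on $D$ and Theorem~\ref{order_2_monomial_result} applies, while if $\psi$ is a non-trivial combination of $g^{\pm1}$, then (after rescaling and, if necessary, applying the isomorphism $D_q\cong D_{q^{-1}}$ of Theorem~\ref{AD_prop}, which is where the cube root of $q$ may be needed) the involution matches $\varphi$ from \eqref{def_varphi}, and Theorem~\ref{epic_theorem} applies. Either way, $D^{\eta}\cong D$ for the same $q$.

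The main obstacle is the computation in the second step: the formula for $f$ in \eqref{order_3_gens} is cumbersome, so explicitly writing $\tau(f)$ as $\psi(g)\,f^{-1}$ is a lengthy manipulation. In the spirit of the Remark after Theorem~\ref{epic_theorem_2}, one would expect to verify this identification using a computer algebra system.
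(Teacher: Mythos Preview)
Your decomposition $D^{\eta}=(D^{\sigma})^{\tau}$ is correct, but the paper reverses the order of the two quotients: it writes $D^{\eta}=(D^{\tau})^{\eta}$ and computes the residual action of $\eta$ on the generators $u,v$ of $D^{\tau}$ from \eqref{order_2_gens}. Because $u$ and $v$ are very simple, this is a two-line calculation yielding $\eta(u)=-\hat q\,v^{-1}$ and $\eta(v)=-v^{-1}u$; after a scalar change of variables (this is where the cube root of $q$ is used) the induced map is literally a monomial automorphism of order $3$ on $k_q(u,v)$, and Theorem~\ref{epic_theorem_2} finishes the proof immediately. No identification problem arises.

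By contrast, your route leaves a genuine gap beyond the computational burden you acknowledge. First, the deduction that $\tau(f)=\psi(g)\,f^{-1}$ with $\psi\in k(g)$ uses that the centraliser of $g$ in $k_q(f,g)$ is exactly $k(g)$, which fails when $q$ is a root of unity---a case the theorem does not exclude. More seriously, your dichotomy ``$\psi$ is a non-zero scalar'' versus ``$\psi$ is a non-trivial combination of $g^{\pm1}$'' is not exhaustive: the constraint $\psi(g^{-1})=\psi(qg)$ is satisfied by many rational functions, and an arbitrary such $\psi$ need not yield an involution conjugate (by elementary or scalar changes of variable) to the monomial $\tau$ or to $\varphi$. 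Given how complicated the generator $f$ of $D^{\sigma}$ already is (cf.\ the Remark following Theorem~\ref{epic_theorem_2}), there is no a~priori reason the explicit $\psi$ should land in either of your two cases, so even after a computer-algebra verification you might be left with an involution to which neither Theorem~\ref{order_2_monomial_result} nor Theorem~\ref{epic_theorem} applies. Taking the $\tau$-quotient first, as the paper does, sidesteps this entirely.
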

\begin{proof}
We first note that $\eta^3 = \tau$, so $D^{\eta} = (D^{\tau})^{\eta}$.  Take $u,v$ in \eqref{order_2_gens} as our generators of $D^{\tau}$, and now we can observe that the action of $\eta$ on $u$ and $v$ is as follows:
\begin{align*}
\eta(u) &= (y^{-1} - y)(\hat{q}^{-1}x^{-1}y^{-1} - \hat{q}yx)^{-1} \\
&=-\hat{q}(y^{-1} - y)(xy -x^{-1}y^{-1})^{-1} \\
&=-\hat{q}v^{-1} \\
\eta(v) &= (\hat{q}y^{-1}yx - \hat{q}^{-1}yx^{-1}y^{-1})(\hat{q}^{-1}x^{-1}y^{-1} - \hat{q}yx)^{-1} \\
&= -q(x-x^{-1})(xy-x^{-1}y^{-1})^{-1} \\
&= -v^{-1}u 
\end{align*}
Let $p = \sqrt[3]{q^{-1}}$. By making a change of variables $u_1= -p^{-1}\hat{q}^{-1}u$, $v_1 =  pv$ in $D^{\tau} = k_q(u,v)$, we see that $\eta$ acts on $D^{\tau}$ as
\[\eta(u_1) = v_1^{-1}, \quad \eta(v_1) = \hat{q}^{-1}v_1^{-1}u_1\]
This is a monomial map of order 3 and so its fixed ring is isomorphic to $D^{\sigma}$, as noted in \cite[\S1.3]{Baudry}.  Now by Theorem~\ref{epic_theorem_2} and Theorem~\ref{order_2_monomial_result}, $D^{\eta} = (D^{\tau})^{\eta} \cong (D^{\tau})^{\sigma} \cong D^{\tau} \cong D$.
\end{proof}
\begin{theorem}\label{thm_monomial_results_2}
Let $k$ be a field of characteristic zero, containing a primitive third root of unity $\omega$ and both a second and a third root of $q$.  If $G$ is a finite group of monomial automorphisms of $D$ then $D^G \cong D$ for the same choice of $q$.
\end{theorem}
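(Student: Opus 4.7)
The plan is to reduce this theorem to the case-by-case classification that has already been carried out. By Definition~\ref{def_monomial_autos} and the discussion immediately following, the rule $g \mapsto (g.x, g.y)$ furnishes a group homomorphism $\Phi: SL_2(\mathbb{Z}) \to Aut(D)$ whose image is exactly the set of monomial automorphisms (this is the point of Baudry's construction in \cite[\S1.3]{Baudry}). Consequently any finite group $G$ of monomial automorphisms lifts to a finite subgroup $H$ of $SL_2(\mathbb{Z})$, and by the classical classification \cite[\S1.10.1]{Lorenz} already invoked in the section, $H$ is conjugate inside $SL_2(\mathbb{Z})$ to either the trivial group or to one of the four cyclic subgroups whose generators are listed in Table~\ref{table_of_maps}.

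Next, I would verify that this $SL_2(\mathbb{Z})$-conjugacy translates into an isomorphism of fixed rings. If $H' = M H M^{-1}$ for some $M \in SL_2(\mathbb{Z})$ and we set $\psi = \Phi(M) \in Aut(D)$, then $\Phi(H') = \psi \Phi(H) \psi^{-1}$ because $\Phi$ is a group homomorphism; hence $\psi$ sends elements fixed by $\Phi(H)$ bijectively to elements fixed by $\Phi(H')$, and its restriction $\psi|_{D^{\Phi(H)}} : D^{\Phi(H)} \to D^{\Phi(H')}$ is a $k$-algebra isomorphism. Without loss of generality, therefore, $G$ may be taken to be one of $\langle \tau \rangle$, $\langle \sigma \rangle$, $\langle \rho \rangle$, $\langle \eta \rangle$ (the trivial case being vacuous).

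These four non-trivial cases are now precisely Theorem~\ref{order_2_monomial_result}, Theorem~\ref{epic_theorem_2}, Theorem~\ref{order_4_thm}, and Theorem~\ref{order_6_thm}, each of which asserts $D^G \cong D$ for the same choice of $q$, so the result follows. I expect no real obstacle at this stage: the substantive difficulty has already been pushed into the preceding theorems (in particular the order-$3$ case of Theorem~\ref{epic_theorem_2}, where the explicit generators $f, g$ are highly non-obvious), while the remaining step is only the routine $SL_2(\mathbb{Z})$ bookkeeping together with the homomorphism property of $\Phi$ and the fact that conjugation by an automorphism preserves fixed-ring structure up to $k$-algebra isomorphism.
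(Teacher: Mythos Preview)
Your proposal is correct and follows essentially the same approach as the paper. The paper's own proof is the one-line citation ``Theorems~\ref{order_4_thm}, \ref{epic_theorem_2}, \ref{order_6_thm} and \cite[\S13.6]{SV1}'', relying on the earlier discussion (around Table~\ref{table_of_maps} and the reference to \cite[\S1.3]{Baudry}) that it suffices to check one representative per $SL_2(\mathbb{Z})$-conjugacy class; you have simply made that reduction explicit by spelling out the homomorphism $\Phi$, the lifting to $SL_2(\mathbb{Z})$, and the fact that conjugation by $\Phi(M)$ carries one fixed ring isomorphically onto the other.
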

\begin{proof}
Theorems~\ref{order_4_thm}, \ref{epic_theorem_2}, \ref{order_6_thm} and \cite[\S13.6]{SV1}.
\end{proof}

\section{Consequences for the automorphism group of $D$}\label{consequences}

The construction of $q$-commuting pairs of elements is closely linked to questions about the automorphisms and endomorphisms of the $q$-division ring: such maps are defined precisely by where they send the two generators of $D$, and naturally these images must $q$-commute.  Despite similarities to the commutative field $k(x,y)$ a full description of the automorphism group $Aut(D)$ remains unknown, with a major stumbling block being understanding the role played by conjugation maps.

Intuition suggests that ``inner automorphism'' and ``conjugation'' should be synonymous; certainly all conjugation maps should be bijective, at the very least.  Here we challenge this intuition by showing that the conjugation maps defined in \cite{AC1} not only gives rise to conjugations which are not inner, but also conjugation maps which are well-defined \textit{endomorphisms} (not automorphisms) on $D$.  This provides answers to several of the questions posed at the end of \cite{AC1} (outlined in Questions~\ref{AC_questions} below), while also raising several new ones.

For the remainder of this section, we require that $q \in k^{\times}$ is not a root of unity, that is $q^n \neq 1$ for all $n \geq 1$.

Let $X$, $Y$ be a pair of $q$-commuting generators for $D$.  As in \cite{AC1}, we call an automorphism of $D$ \textit{elementary} if it has one of the following forms:
\begin{equation}\label{elementary_autos}\begin{aligned}
\tau:&{}\quad X\mapsto X^{-1}, \ Y \mapsto Y^{-1} \\
h_X:&{}\quad X \mapsto b(Y)X, \ Y \mapsto Y,\quad b(Y) \in k(Y)^{\times}\\
h_Y:&{}\quad X \mapsto X,\ Y \mapsto a(X)Y,\quad a(X) \in k(X)^{\times} 
\end{aligned}\end{equation}
In \cite{AC1}, the authors prove that any endomorphism $\psi$ of $k_q(X,Y)$ can be decomposed into a product of maps
\begin{equation}\label{decomp}\psi = \varphi_1\dots \varphi_n c_{z^{-1}} \tau^{\epsilon}.\end{equation}
Here the $\varphi_i$ are elementary automorphisms and $\tau$ is as defined in \eqref{elementary_autos}, while $\epsilon \in \{0,1\}$.  The map $c_{z^{-1}}(r) = z^{-1}rz$ $\forall r \in k_q(X,Y)$ represents conjugation by some element $z^{-1} \in k_q(Y)(\!(X)\!)$. Here $z$ is defined recursively as follows.
\begin{proposition} \cite[Proposition~3.3]{AC1}\label{AC_theorem_recap} Let $F$, $G \in k_q(Y)(\!(X)\!)$ be $q$-commuting elements of the form
\begin{equation}\label{standard_form}F = f_sX^{s} + \sum_{i > s} f_i X^i, \quad G = \lambda Y^{s} + \sum_{i> 0} g_i X^i\end{equation}
with $s \in \{1,-1\}$, $\lambda \in k^{\times}$ and $f_i$, $g_i \in k(Y)$.  Then there exists an element $z \in k_q(Y)(\!(X)\!)$ defined by
\begin{equation}\label{def_z}\begin{gathered}z_0 = 1; \quad z_n = Y^{-s}(1-q^s)^{-1}\left(g_n + \sum_{\stackrel{i + j = n}{i,j>0}}z_j\alpha^j(g_i)\right) \textrm{ for } n\geq 1; \\
z: = \sum_{i \geq 0}z_i X^i. \end{gathered}
\end{equation}
such that
\[zFz^{-1} = f_sX^s, \quad zGz^{-1} = \lambda Y^s \]
\end{proposition}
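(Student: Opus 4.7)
The plan is to construct $z$ recursively so that $zGz^{-1} = \lambda Y^s$, and then deduce $zFz^{-1} = f_s X^s$ from the $q$-commutation of $F$ and $G$.

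For the first part, I would rewrite the goal $zGz^{-1} = \lambda Y^s$ as $zG = (\lambda Y^s)z$ and expand both sides as elements of $k_q(Y)(\!(X)\!)$ using $X^i r(Y) = \alpha^i(r)X^i$. Collecting the coefficient of $X^n$ on each side yields an identity in $k(Y)$ in which the only occurrence of $z_n$ carries a scalar prefactor of the shape $1 - q^{ns}$. Since $s = \pm 1$ and $q$ is not a root of unity, this prefactor is nonzero for every $n \geq 1$, so $z_n$ is uniquely determined by $z_0, \dots, z_{n-1}$, $g_n$, and the action of $\alpha$; setting $z_0 = 1$ produces the series $z = \sum_{i\geq 0} z_i X^i$ in the statement. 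Because $z$ has constant term $1$ in its $X$-expansion it is automatically invertible in $k_q(Y)(\!(X)\!)$, and $zGz^{-1} = \lambda Y^s$ holds by construction.

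To deduce $zFz^{-1} = f_sX^s$, I would invoke the $q$-commutation $FG = qGF$ and conjugate both sides by $z$: this yields $(zFz^{-1})(\lambda Y^s) = q(\lambda Y^s)(zFz^{-1})$, so $zFz^{-1}$ $q$-commutes with $Y^s$. Writing $zFz^{-1} = \sum_i h_i X^i$ with $h_i \in k(Y)$ and expanding this $q$-commutation coefficient by coefficient forces $(q^{is} - q)\,h_i\,Y^s = 0$ for every $i$; the non-root-of-unity hypothesis then gives $h_i = 0$ unless $is = 1$, i.e.\ $i = s$. Hence $zFz^{-1} = h_s X^s$ is supported in a single $X$-degree. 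Comparing lowest-order $X^s$-coefficients, and using that $z$ and $z^{-1}$ both have constant term $1$ in $X$ while $F$ begins with $f_s X^s$, gives $h_s = f_s$.

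The main technical issue is bookkeeping in the noncommutative Laurent-series ring $k_q(Y)(\!(X)\!)$: one has to keep track of which side $\alpha^i$ acts on when multiplying series and of the leading terms of $z$ and $z^{-1}$. The non-root-of-unity hypothesis is used in two essential places — inverting the scalar factor in the recursion for $z_n$, and pinning down the support of any series that $q$-commutes with $Y^s$.
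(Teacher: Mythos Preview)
The paper does not supply its own proof of this proposition: it is quoted directly from \cite[Proposition~3.3]{AC1}, so there is nothing in the text to compare against line by line. Your approach---solve the recursion so that $zGz^{-1}=\lambda Y^s$, then use $FG=qGF$ to force $zFz^{-1}$ to be supported in the single degree $X^s$---is exactly the standard argument behind the Artamonov--Cohn result, and each step you outline is correct.

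One small remark worth recording: when you actually carry out the coefficient comparison in $zG=\lambda Y^s z$, the term containing $z_n$ picks up the factor $\lambda(1-q^{ns})Y^s$, not $(1-q^s)Y^s$. So the recursion you derive reads
\[
z_n=\lambda^{-1}Y^{-s}(1-q^{ns})^{-1}\Bigl(g_n+\sum_{\substack{i+j=n\\ i,j>0}}z_j\,\alpha^j(g_i)\Bigr),
\]
which differs cosmetically from the displayed formula \eqref{def_z} (the paper has $(1-q^s)^{-1}$ and no $\lambda^{-1}$). This does not affect the argument---the point is only that the scalar in front of $z_n$ is nonzero for all $n\geq 1$ once $q$ is not a root of unity---but it is worth noting that your computation, not the transcribed formula, is the one that actually yields $zGz^{-1}=\lambda Y^s$.
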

The following questions are posed by Artamonov and Cohn in \cite{AC1}.
\begin{questions}\label{AC_questions}\ 
\begin{enumerate}
 \item Does there exist an element $z$ satisfying the recursive definition \eqref{def_z}, such that $z \not\in k_q(X,Y)$?  What if $z^n \in k_q(X,Y)$ for some positive integer $n$?
 \item Does there exist an element $z$ from \eqref{def_z} such that $z^{-1}k_q(X,Y)z \subsetneq k_q(X,Y)$?
 \item The group of automorphisms of $k_q(X,Y)$ is generated by elementary automorphisms, conjugation by some elements of the form $z$, and $\tau$.  Find a set of defining relations for this generating set.
\end{enumerate}
\end{questions}
We first note that (3) needs rephrasing, since we can provide affirmative answers for both (1) and (2).  Indeed, we will construct examples of conjugation automorphisms $c_z$ satisfying $z^2 \in D$ (Proposition~\ref{z_automorphism_order_2}) and $z^n \not\in D$ for all $n \geq 1$ (Proposition~\ref{z_automorphism_inf_order}), and also a conjugation endomorphism such that $z^{-1}Dz \subsetneq D$ (Proposition~\ref{z_endomorphism}).

In light of this, (3) should be modified to read:
\begin{enumerate}\setcounter{enumi}{3}
\item Under what conditions is $c_z$ an automorphism of $D$ rather than an endomorphism?  Using this, give a set of generators and relations for $Aut(D)$.
\end{enumerate}
For each of our examples below, we start by defining a homomorphism $\psi$ on $D$ and then verify that the image of the generators of $D$ under $\psi$ have the form \eqref{standard_form}.  This allows us to use Proposition~\ref{AC_theorem_recap} to construct $z$ as in \eqref{def_z} such that $\psi = c_{z^{-1}}$ (possibly after a change of variables in $D$ to ensure the leading coefficients are both 1).  The final step in each proof is checking whether $z \in D$, for which we will use the following lemmas.
\begin{lemma}\label{two_conjugations}
 Let $z_1$, $z_2 \in k_q(Y)(\!(X)\!)$.  If the conjugation maps $c_{z_1}$ and $c_{z_2}$  have the same action on $D = k_q(X,Y)$, then $z_1$ and $z_2$ differ only by a scalar.
\end{lemma}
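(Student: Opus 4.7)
The plan is to show that if $c_{z_1}$ and $c_{z_2}$ agree on $D$, then $w := z_2^{-1}z_1$ lies in the centralizer of $D$ inside $k_q(Y)(\!(X)\!)$, and then to prove that this centralizer consists only of scalars. Indeed, the equality $z_1 r z_1^{-1} = z_2 r z_2^{-1}$ for all $r \in D$ rearranges to $w r = r w$ for all $r \in D$, so $w$ commutes with every element of $D$. Since $D$ is generated as a $k$-algebra by $X$ and $Y$, it is enough to check that $w$ commutes with both $X$ and $Y$.

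Write $w = \sum_{i \geq n} w_i X^i$ with each $w_i \in k(Y)$, and recall that in $k_q(Y)(\!(X)\!)$ we have $X f(Y) = f(qY) X$, i.e.\ $X$ acts on $k(Y)$ via the automorphism $\alpha\colon Y \mapsto qY$. Computing
\[
Xw \;=\; \sum_{i \geq n} \alpha(w_i)\, X^{i+1}, \qquad wX \;=\; \sum_{i \geq n} w_i\, X^{i+1},
\]
the condition $Xw = wX$ forces $w_i(qY) = w_i(Y)$ for every $i$. Since $q$ is not a root of unity, a nonzero rational function in $k(Y)$ fixed by $Y \mapsto qY$ must be constant: any such $w_i$, written in lowest terms as $P(Y)/Q(Y)$, would have roots or poles forming a $\langle q \rangle$-orbit, which is infinite unless the function has no roots or poles at all, i.e.\ is a scalar. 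Hence each $w_i \in k$.

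With this in hand, the relation $Yw = wY$ becomes straightforward: using $X^i Y = q^i Y X^i$ and the fact that each $w_i$ is now central in $k(Y)$, we get
\[
wY \;=\; \sum_i w_i q^i Y X^i \;=\; \sum_i q^i w_i Y X^i, \qquad Yw \;=\; \sum_i w_i Y X^i.
\]
Comparing coefficients gives $(q^i - 1) w_i = 0$ for every $i$, and since $q$ is not a root of unity this forces $w_i = 0$ for all $i \neq 0$. Thus $w = w_0 \in k^{\times}$ is a nonzero scalar, yielding $z_1 = w_0 z_2$ as claimed. The only place where any care is needed is the rational-function argument in the middle step, but the hypothesis on $q$ makes this routine.
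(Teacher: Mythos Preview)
Your proof is correct and follows essentially the same strategy as the paper: show that $w=z_2^{-1}z_1$ centralizes $D$ in $k_q(Y)(\!(X)\!)$, then check that this centralizer is just $k$ by commuting with $X$ and $Y$ separately. The only difference is that you handle commutation with $X$ first (requiring the small argument that a rational function fixed by $Y\mapsto qY$ is constant) and then with $Y$, whereas the paper does $Y$ first to kill all $X^i$ terms with $i\neq 0$ and then invokes $Z(D)=k$; both orderings work, though the paper's avoids the rational-function step.
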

\begin{proof}
 If $c_{z_1} = c_{z_2}$, then $z_1Yz_1^{-1} = z_2Yz_2^{-1}$, i.e. $z_2^{-1}z_1Y = Yz_2^{-1}z_1$.  Similarly $z_2^{-1}z_1$ commutes with $X$, so $z_2^{-1}z_1$ is in the centralizer of $D$ in $k_q(Y)(\!(X)\!)$, which we write as $C(D)$.

We now verify that $C(D) = k$.  Indeed, if $u = \sum_{i \geq n}a_i X^i \in C(D)$, then $u$ must commute with $Y$, i.e.
\[Y \sum_{i \geq n} a_i X^i = \sum_{i \geq n} a_i X^i Y = \sum_{ i\geq n}q^ia_i Y X^i\]
Since $q$ is not a root of unity, we must have $a_i =0$ for all $i \neq 0$, i.e. $u = a_0 \in k(Y)$.  Since $u$ is now in $D$ and must commute with both $X$ and $Y$, $u \in Z(D) = k$.  The result now follows.
\end{proof}
Recall that for $r \in k_q[X,Y]$, $deg_X(r)$ denotes the degree of $r$ as a polynomial in $X$.  This extends naturally to a notion of degree on $k_q(X,Y)$ by defining 
\[deg_X(t^{-1}s) := deg_X(s) - deg_X(t),\]
where $s, t \in k_q[X,Y]$.  We note that this definition is multiplicative.
\begin{lemma}\label{matching_degrees} If $c_z$ is an inner automorphism on $k_q(X,Y)$, then $c_z(Y) = v^{-1}u$ satisfies $deg_X(u) = deg_X(v)$.\end{lemma}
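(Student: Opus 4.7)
The plan is to exploit the multiplicativity of the $X$-degree, which the preamble to the lemma has already established extends from $k_q[X,Y]$ to all of $D = k_q(X,Y)$ via $\deg_X(t^{-1}s) = \deg_X(s) - \deg_X(t)$. Multiplicativity forces $\deg_X(1) = 0$ and hence $\deg_X(z^{-1}) = -\deg_X(z)$ for any nonzero $z \in D$.

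Since $c_z$ is assumed \emph{inner}, by the convention laid out in \S\ref{notation} we have both $z \in D$ and $z^{-1} \in D$, so $\deg_X(z)$ is a well-defined integer. The element $Y$ has $X$-degree $0$, so by multiplicativity
\[
\deg_X(c_z(Y)) \;=\; \deg_X(z) + \deg_X(Y) + \deg_X(z^{-1}) \;=\; 0.
\]

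Writing $c_z(Y) = v^{-1}u$ with $u,v \in k_q[X,Y]$, the same rule evaluates the left-hand side as $\deg_X(u) - \deg_X(v)$, which must therefore vanish; this is exactly the claim.

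There is essentially no obstacle here: the entire content of the lemma is the observation that conjugation by an element of $D$ preserves $X$-degree. The only thing worth double-checking before publishing the argument is that the degree function is indeed well-defined on fractions (independent of the chosen representative $t^{-1}s$) and genuinely multiplicative; both facts follow from the Ore condition for $k_q[X,Y]$ together with additivity of $\deg_X$ on products in $k_q[X,Y]$, and they are already taken for granted in the preamble to the statement.
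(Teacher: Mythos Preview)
Your proof is correct and follows the same core idea as the paper's: both arguments reduce to the multiplicativity of $\deg_X$ on $D$ together with $\deg_X(Y)=0$. The paper takes a small detour, writing $z=t^{-1}s$ and using the commutation rule $YX=\beta(X)Y$ (with $\beta(X)=q^{-1}X$) to obtain the explicit expression $c_z(Y)=t^{-1}s\,\beta(s)^{-1}\beta(t)\,Y$ before invoking multiplicativity, whereas you apply multiplicativity directly to $zYz^{-1}$; your route is slightly more economical but not genuinely different.
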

\begin{proof} We can write the commutation relation in $D$ as $YX = \beta(X)Y$, where $\beta$ is the automorphism $X \mapsto q^{-1}X, \ Y \mapsto Y$.  Let $c_z$ be an inner automorphism on $D$, so that $z = t^{-1}s \in D$ for $s, t \in k_q[X,Y]$.  Thus the image of $Y$ under $c_z$ is
 \begin{equation}\label{ore_1}c_z(Y) = t^{-1}s\beta(s)^{-1}\beta(t) Y\end{equation}
Since $\beta$ does not affect the $X$-degree of a polynomial and $deg_X$ is multiplicative, it is clear from \eqref{ore_1} that $deg_X(c_z(y)) = 0$ and hence $deg_X(u) = deg_X(v)$ as required.
\end{proof}

We are now in a position to answer Questions~\ref{AC_questions} (1) and (2).

\begin{proposition}\label{z_automorphism_order_2}
Let $D$ and $\varphi$ be as in Theorem~\ref{epic_theorem}, and set $E = D^{\varphi}$.  With an appropriate choice of generators for $E$, the map
\[\gamma: \ E \rightarrow E:\  r \mapsto (y^{-1} + y)r(y^{-1} + y)^{-1}\]
defined in Lemma~\ref{gamma_def} is an automorphism of the form $c_{z^{-1}}$, with $z$ defined as in \eqref{def_z}. Further, we have $z\not\in E$ while $z^2 \in E$.  This provides an affirmative answer to Question~\ref{AC_questions} (1).
\end{proposition}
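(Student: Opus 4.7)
The map $\gamma$ from Lemma~\ref{gamma_def} is, by construction, the restriction to $E$ of conjugation by $b := y^{-1} + y$, an element of $D \setminus E$. The plan is to run the Artamonov--Cohn recursion of Proposition~\ref{AC_theorem_recap} to obtain an explicit $z$ realizing $\gamma$ as $c_{z^{-1}}$, identify $z$ with a scalar multiple of $b^{-1}$, and then deduce the two claims $z \notin E$ and $z^2 \in E$ from the corresponding facts about $b$ recorded in Lemma~\ref{computation_lemma}.

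Concretely, first fix a genuinely $q$-commuting pair of generators of $E$: starting from the quantum Weyl pair $(h,g)$ of \S\ref{fixed ring}, set $Y := g$ and $X := \frac{1}{1-q}(hg-gh)$ as in the proof of Theorem~\ref{epic_theorem}, so that $XY = qYX$ and $E = k_q(X,Y)$, and embed $E$ into $k_q(Y)(\!(X)\!)$ in the standard way. Next, expand $\gamma(X) = bXb^{-1}$ and $\gamma(Y) = bYb^{-1}$ as Laurent series in $X$ with coefficients in $k(Y)$, using Lemma~\ref{gamma_def} together with the identities \eqref{formulas_cb}, which express the products $ab$ and $cb$ in terms of elements of $E$. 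After a harmless rescaling to make leading coefficients equal to $1$, one verifies that both images sit in the standard form \eqref{standard_form}; Proposition~\ref{AC_theorem_recap} then applies and produces an explicit $z \in k_q(Y)(\!(X)\!)$ with $c_{z^{-1}} = \gamma$.

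For the final step, observe that $c_{z^{-1}}$ and $c_b$ induce the same action on $E$, so $bz$ centralizes $E$; the same centralizer computation as in the proof of Lemma~\ref{two_conjugations}, applied to $E \cong k_q(X,Y)$ (and using that $q$ is not a root of unity), forces $bz$ to be a scalar, so $z$ is a scalar multiple of $b^{-1}$. Since $\varphi(b) = -b$ we have $b \notin E$, and hence $z \notin E$; since $b^2 \in E$ by Lemma~\ref{computation_lemma}(ii), $z^2$ is a scalar multiple of $b^{-2}$ and therefore lies in $E$. This yields the affirmative answer to Question~\ref{AC_questions}(1).

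The main technical obstacle is the Laurent-series step: rewriting $\gamma(X)$ and $\gamma(Y)$ in the new variables $X, Y$ and verifying that the resulting series genuinely fit the restricted form \eqref{standard_form} demanded by Proposition~\ref{AC_theorem_recap}. Once that is in hand, the uniqueness of the conjugating element and the integrality property $b^2 \in E$ from Lemma~\ref{computation_lemma}(ii) close the argument almost immediately.
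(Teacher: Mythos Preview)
Your proposal is correct and follows the same route as the paper: pass to the $q$-commuting generators $f=\tfrac{1}{1-q}(hg-gh)$ and $g$ of $E$, check that $\gamma(g)$ and $\gamma(f)$ land in the standard form \eqref{standard_form}, rescale and apply Proposition~\ref{AC_theorem_recap}, then invoke Lemma~\ref{two_conjugations} to identify $z$ with a scalar multiple of $b^{\pm1}$ and conclude from $b\notin E$, $b^2\in E$. The paper actually carries out the Laurent-series step you flag as the main obstacle (computing the leading term of $\gamma(g)=(cb)b^{-2}$ to be $qg$), and it saves work by invoking \cite[Proposition~3.2]{AC1} to deduce the required shape of $\gamma(f)$ automatically once that of $\gamma(g)$ is known, rather than expanding both images by hand.
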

\begin{proof}
$E$ is a $q$-division ring by Theorem~\ref{epic_theorem}, and $\gamma$ is an automorphism by Lemma~\ref{gamma_def}.  Write $E = k_q(f,g)$, where $f:=\frac{1}{1-q}(hg-gh)$ as in the proof of Theorem~\ref{epic_theorem}, which allows us to use the methods of \cite{AC1}.

In order to check that $\gamma$ has the form described by \eqref{def_z}, it is sufficient to check that $\gamma(f)$, $\gamma(g)$ are of the form \eqref{standard_form}.  Recall that $\gamma(g) = (cb)b^{-2}$, which can be written in terms of $h$ and $g$ using Lemma~\ref{computation_lemma} and \eqref{formulas_cb}.  From the definition $f = \frac{1}{1-q}(hg - gh)$ we find that $h = g^{-1}(1-f)$, and after some rearranging we obtain
\begin{align*}
 b^2 &= (q^3 - g^4)(q^7-g^4)q^{-6}g^{-3}f^{-2} + [\textrm{higher terms in }f] \\
 cb &= (q^3 - g^4)(q^7-g^4)q^{-7}g^{-3}f^{-2} + [\textrm{higher terms in }f]
\end{align*}
Therefore the lowest term of $\gamma(g) = (cb)b^{-2}$ is $qg$, as required.

By \cite[Proposition~3.2]{AC1}, it now follows that $\gamma(f)$ must have the form
\[\gamma(f) = b_1f + \sum_{i > 1} b_i f^i, \quad b_1, b_i \in k(g).\]
We can make a change of variables in $k_q(f,g)$ using elementary automorphisms to scale $\gamma(g)$ by $q^{-1}$ and ensure that $b_1= 1$. Now by \cite[Theorem~3.5]{AC1}, $\gamma = c_{z^{-1}}$ with $z$ constructed as in \eqref{def_z}.  

By Lemma \ref{two_conjugations}, $y+y^{-1}$ and $z$ differ by at most a scalar.  Since $y + y^{-1} \not\in E$, $\gamma = c_{z^{-1}}$ defines an automorphism of $E$ with $z \not\in E$.

Finally, we have already noted in Lemma~\ref{computation_lemma} (ii) that $b^2 = (y-y^{-1})^2 + 4 \in E$, and so $z^2 \in E$ as well.
\end{proof}

\begin{remark}It is worth noting that this phenomenon of non-inner conjugations cannot happen when $q$ is a root of unity.  Indeed, if $q^n = 1$ for some $n$, then $D$ is a finite dimensional central simple algebra over its centre and by the Skolem-Noether theorem every automorphism of $D$ should be inner.  The automorphism $\gamma$ is still a well-defined automorphism on $D$ in this case, but the difference is that now $D$ has a non-trivial centre: since $y^{n} - y^{-n}$ is a central element we can replace $y + y^{-1}$ with $(y+y^{-1})(y^n - y^{-n})$ in the definition of $\gamma$ without affecting the map at all.  We now have
\[(y+y^{-1})(y^{n} - y^{-n}) = y^{n+1} - y^{-(n+1)} + y^{n-1} - y^{-(n-1)} \in k_q(x,y)^G\]
and so $\gamma$ is indeed an inner automorphism in this case. \end{remark}

\begin{proposition}\label{z_endomorphism}
Let $D = k_q(x,y)$.  Then there exists $z \in k_q(y)(\!(x)\!)$ such that $z^{-1}Dz \subsetneq D$.  This provides an affirmative answer to Question~\ref{AC_questions}(2).
\end{proposition}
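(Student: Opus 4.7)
The plan is to use the isomorphism $D \cong D^{\varphi}$ from Theorem~\ref{epic_theorem} to manufacture a non-surjective endomorphism of $D$, and then invoke the Artamonov--Cohn decomposition \eqref{decomp} to extract the required $z$.

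Specifically, let $\Psi : D \xrightarrow{\sim} D^{\varphi}$ be the isomorphism of Theorem~\ref{epic_theorem} and $\iota : D^{\varphi} \hookrightarrow D$ the inclusion.  Since $\varphi(x) \neq x$ we have $x \in D \setminus D^{\varphi}$, so $\iota$ is strict, and the composition $\psi := \iota \circ \Psi$ is an injective endomorphism of $D$ whose image is the proper subring $D^{\varphi}$.  Applying the decomposition \eqref{decomp} to $\psi$ produces
\[\psi \;=\; \varphi_1 \dots \varphi_n \, c_{z^{-1}} \, \tau^{\epsilon}\]
for some elementary automorphisms $\varphi_i$, some $\epsilon \in \{0,1\}$, and some $z \in k_q(y)(\!(x)\!)$ given recursively by \eqref{def_z}.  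Since $\tau$ and each $\varphi_i$ is an automorphism of $D$, rearranging and evaluating on all of $D$ yields
\[z^{-1}Dz \;=\; c_{z^{-1}}(D) \;=\; (\varphi_n^{-1} \cdots \varphi_1^{-1})\bigl(\psi(D)\bigr) \;=\; (\varphi_n^{-1} \cdots \varphi_1^{-1})(D^{\varphi}),\]
which is the image of a proper subring of $D$ under an automorphism of $D$, hence is itself a proper subring.  This gives $z^{-1}Dz \subsetneq D$ as required.

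The only point demanding care is that the decomposition \eqref{decomp} genuinely applies to an arbitrary endomorphism, not just to automorphisms; this is precisely the generality in which the result is proved in \cite{AC1}, so no additional argument is needed.  If one prefers an explicit $z$, an alternative route is to take the $q$-commuting pair $(f,g)$ from the proof of Proposition~\ref{z_automorphism_order_2} as the images of the standard generators, apply elementary automorphisms to place them in the standard form \eqref{standard_form}, and then read off $z$ via the recursion in Proposition~\ref{AC_theorem_recap}; by Lemma~\ref{two_conjugations} this element agrees, up to a scalar, with the one supplied by the conceptual argument above, but at the cost of considerably heavier computation.
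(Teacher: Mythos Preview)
Your argument is correct and is essentially the paper's own proof, written out in more detail: both use the isomorphism $D\cong D^{\varphi}$ from Theorem~\ref{epic_theorem} to produce a non-surjective endomorphism of $D$, apply the Artamonov--Cohn decomposition \eqref{decomp}, and observe that since every factor except $c_{z^{-1}}$ is already an automorphism of $D$, the failure of surjectivity must land on $c_{z^{-1}}$. Your added remarks about $\tau^{\epsilon}$ disappearing because $\tau^{-\epsilon}(D)=D$, and about the decomposition being stated for arbitrary endomorphisms in \cite{AC1}, are helpful clarifications of steps the paper leaves implicit.
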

\begin{proof}
We can view the isomorphism $\theta: D \stackrel{\sim}{\longrightarrow} D^G$ from Theorem \ref{epic_theorem} as an endomorphism on $D$, and so $\theta$ decomposes into the form \eqref{decomp} with $z$ constructed as in \eqref{def_z}.  Since $\theta$ is not surjective, and $c_{z^{-1}}$ is the only map in the decomposition not already known to be an automorphism, we must have $z^{-1}Dz \subsetneq D$.
\end{proof}

Propositions \ref{z_automorphism_order_2} and \ref{z_endomorphism} illustrate some of the difficulties involved in giving a set of relations for $Aut(D)$: not only is it possible for both endomorphisms and automorphisms to arise as conjugations $c_z$ for $z \not\in D$, but as we show next, it turns out to be quite easy to define an automorphism $\sigma$ on $D$ which is a product of elementary automorphisms, but also satisfies $\psi = c_{z^{-1}}$ with $z^n \not\in D$ for any $n \geq 1$.

\begin{example}\label{auto_example}We define maps by
\begin{align*}
h_1&: x \mapsto (1+y)x, \ y \mapsto y,\\
h_2&: x \mapsto x, \ y\mapsto (1+x)y,\\
h_3&: x \mapsto \frac{1}{1+y}x, \ y \mapsto y,
\end{align*}
which are all elementary automorphisms on $k_q(x,y)$.  Let $\psi = h_3\circ h_2 \circ h_1$, so that
\begin{align*}
\psi(x) &=  x + \frac{qy}{(1+y)(1+qy)}x^2,\\
\psi(y) &=  y + \frac{qy}{1+y}x.
\end{align*}
is an automorphism on $k_q(x,y)$.  These have been chosen so that $\psi(x)$, $\psi(y)$ are already in the form \eqref{standard_form}, so there exists $z \in k_q(y)(\!(x)\!)$ defined by \eqref{def_z} such that $\psi = c_{z^{-1}}$.  Since $\psi(y)$ is a polynomial in $x$ of non-zero degree, $\psi$ is not an inner automorphism by Lemma \ref{matching_degrees}.

In fact, $\psi^n(y)$ is a polynomial in $x$ of degree $n$.  The key observation in proving this is to notice that $(1+y)^{-1}x$ is fixed by $\psi$.  Indeed,
\begin{align*}
 \psi((1+y)^{-1}x) &= \left[1 + y + \frac{qy}{1+y}x\right]^{-1}\left[x + \frac{qy}{(1+y)(1+qy)}x^2\right] \\
&= \left[\left(1+\frac{qy}{(1+y)(1+qy)}x\right)(1+y)\right]^{-1}\left[1 + \frac{qy}{(1+y)(1+qy)}\right]x \\
&= (1+y)^{-1}x.
\end{align*}
If we write $\psi(y) = y(1+q(1+y)^{-1}x)$, it is now clear by induction that 
\begin{equation}\label{psi_induction}\psi^n(y) = \psi^{n-1}(y)(1+q(1+y)^{-1}x)\end{equation}
is polynomial in $x$ of degree $n$.  
\end{example}
\begin{proposition}\label{z_automorphism_inf_order}With $\psi$ as in Example~\ref{auto_example} and $D = k_q(x,y)$, $\psi = c_{z^{-1}}$ is an example of a conjugation automorphism satisfying $z^n \not\in D$ for all $n \geq 1$.
\end{proposition}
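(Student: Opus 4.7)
The plan is to combine the explicit induction \eqref{psi_induction} from Example~\ref{auto_example} with the degree obstruction in Lemma~\ref{matching_degrees}. First I would verify that Proposition~\ref{AC_theorem_recap} applies to $\psi$: the images
\[\psi(x) = x + \frac{qy}{(1+y)(1+qy)}\, x^2, \qquad \psi(y) = y + \frac{qy}{1+y}\, x\]
are already in the form \eqref{standard_form} with $s=1$, leading coefficient $f_1 = 1$ and $\lambda = 1$, so an element $z \in k_q(y)(\!(x)\!)$ can be built by the recursion \eqref{def_z} with $\psi = c_{z^{-1}}$. By Lemma~\ref{two_conjugations}, $z$ is determined up to a scalar, so the statement ``$z^n \in D$'' is unambiguous.

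Next I would iterate. From $\psi = c_{z^{-1}}$ one obtains $\psi^n = c_{z^{-n}}$ for all $n \geq 1$. Assume for contradiction that $z^n \in D$ for some $n \geq 1$; since $D$ is a division ring, $z^{-n} \in D$ as well, so $\psi^n = c_{z^{-n}}$ would be an inner automorphism of $D$. Lemma~\ref{matching_degrees} would then force $deg_x(\psi^n(y)) = 0$: in any representation $\psi^n(y) = v^{-1}u$, the numerator and denominator must share the same $x$-degree.

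To derive a contradiction I would then compute $\psi^n(y)$ explicitly. From the identity \eqref{psi_induction}, the fact that $(1+y)^{-1}x$ is $\psi$-fixed, and the base case $\psi(y) = y\bigl(1 + q(1+y)^{-1}x\bigr)$, a direct induction yields $\psi^n(y) = y\bigl(1 + q(1+y)^{-1}x\bigr)^n$, a polynomial in $x$ of degree exactly $n$ with leading coefficient $q^n y (1+y)^{-n} \neq 0$. Hence $deg_x(\psi^n(y)) = n \geq 1$, contradicting the conclusion of the previous paragraph, so $z^n \notin D$ for every $n \geq 1$. There is no serious obstacle: the substantive work was already performed in Example~\ref{auto_example}, where the fixed element $(1+y)^{-1}x$ was exhibited to drive the induction, and the only subtlety is confirming that the $z$ produced by the Artamonov--Cohn recursion coincides up to scalar with any other conjugator implementing $\psi$, which is exactly Lemma~\ref{two_conjugations}.
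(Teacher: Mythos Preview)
Your argument is correct and follows exactly the paper's route: invoke \eqref{psi_induction} to see that $\psi^n(y)=z^{-n}yz^n$ has $x$-degree $n$, then apply Lemma~\ref{matching_degrees} to conclude $z^n\notin D$. One cosmetic slip: the leading coefficient of $\psi^n(y)$ in the standard form is $q^n y\prod_{i=0}^{n-1}(1+q^iy)^{-1}$ rather than $q^n y(1+y)^{-n}$, since $x(1+y)^{-1}=(1+qy)^{-1}x$; this does not affect the conclusion that the degree is exactly $n$.
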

\begin{proof}
By \eqref{psi_induction}, $\psi^n(y) = z^{-n}yz^n$ is a polynomial in $x$ of degree $n$ so by Lemma~\ref{matching_degrees} we have $z^n \not\in D$ for all $n \geq 1$.
\end{proof}
Combining these results, we obtain the theorem promised in the introduction.
\begin{theorem}\label{thm_AC_results_2}
Let $k$ be a field of characteristic zero and $q \in k^{\times}$ not a root of unity.  Then:
\begin{enumerate}[(i)]
\item The $q$-division ring $D$ admits examples of bijective conjugation maps by elements $z \not\in D$; these include examples satisfying $z^n \in D$ for some positive $n$, and also those such that $z^n \not\in D$ for all $n \geq 1$.
\item $D$ also admits an endomorphism which is not an automorphism, which can be represented in the form of a conjugation map.  
\end{enumerate}
\end{theorem}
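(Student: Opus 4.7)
The plan is to assemble Theorem~\ref{thm_AC_results_2} by combining the three propositions already established in this section: Propositions~\ref{z_automorphism_order_2}, \ref{z_automorphism_inf_order}, and \ref{z_endomorphism}. All of the substantive work lives there; what remains is to verify that the examples constructed in each really fit the framework of Proposition~\ref{AC_theorem_recap}, i.e.\ that the relevant conjugating elements genuinely arise from the recursion \eqref{def_z}.

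For the first half of part (i)---the case $z\notin D$ with $z^n\in D$ for some $n\geq 1$---I would appeal to Proposition~\ref{z_automorphism_order_2}. The key inputs are the identification $E = D^{\varphi}\cong D$ from Theorem~\ref{epic_theorem} and the conjugation automorphism $\gamma$ of Lemma~\ref{gamma_def}. After rewriting $E$ in terms of the $q$-commuting pair $(f,g)$ and performing a change of variables by elementary automorphisms to normalise the leading coefficient of $\gamma(g)$, the images of the generators under $\gamma$ are already in the standard form \eqref{standard_form}, so \cite[Theorem~3.5]{AC1} applies to produce $\gamma = c_{z^{-1}}$. Lemma~\ref{two_conjugations} then forces $z$ to be a scalar multiple of $y+y^{-1}$, which lies outside $E$, while Lemma~\ref{computation_lemma}(ii) gives $z^2\in E$.

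For the second half of part (i), I would invoke Proposition~\ref{z_automorphism_inf_order} together with the explicit composition $\psi = h_3\circ h_2\circ h_1$ of Example~\ref{auto_example}. The crucial observation is that $(1+y)^{-1}x$ is $\psi$-fixed, which yields the recursion \eqref{psi_induction} and shows by induction that $\psi^n(y)$ is a polynomial in $x$ of $x$-degree exactly $n$. Since an inner conjugation cannot shift the $X$-degree of the image of $y$ (Lemma~\ref{matching_degrees}), each $z^n$ must fail to lie in $D$.

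For part (ii), the plan is to view the isomorphism $\theta\colon D \xrightarrow{\sim} D^{\varphi}$ from Theorem~\ref{epic_theorem} as a non-surjective endomorphism $\theta\colon D\to D$ and apply the Artamonov--Cohn decomposition \eqref{decomp}: $\theta = \varphi_1\cdots\varphi_n\, c_{z^{-1}}\,\tau^{\epsilon}$. Every factor except $c_{z^{-1}}$ is already known to be an automorphism, so the failure of $\theta$ to be surjective is absorbed entirely by the conjugation map, giving $z^{-1}Dz\subsetneq D$. The only real obstacle is cosmetic: the Artamonov--Cohn construction requires the image of the generators under $\theta$ to be in the standard form \eqref{standard_form}, so one may need to precompose with elementary automorphisms to align leading coefficients before invoking Proposition~\ref{AC_theorem_recap}; this adjustment does not disturb the conclusion that the resulting $c_{z^{-1}}$ is a non-surjective endomorphism realised as a conjugation.
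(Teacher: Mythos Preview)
Your proposal is correct and follows exactly the same approach as the paper: the paper's proof of Theorem~\ref{thm_AC_results_2} is simply a one-line citation of Propositions~\ref{z_automorphism_order_2}, \ref{z_endomorphism}, and \ref{z_automorphism_inf_order}, and you have reproduced this assembly together with an accurate summary of how each proposition supplies the required example.
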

\begin{proof}
Propositions~\ref{z_automorphism_order_2}, \ref{z_endomorphism}, \ref{z_automorphism_inf_order}.
\end{proof}
We have seen that the set of generators for $Aut(D)$ proposed by Artamonov and Cohn in \cite{AC1} in fact generate the whole endomorphism group $End(D)$.  In \cite{AD3}, Alev and Dumas construct another potential set of generators for $Aut(D)$ by analogy to the commutative case, which corresponds to the group generated by the elementary automorphisms and the \textit{inner} automorphisms.  

A good test case for this proposed set of generators would be the automorphism $\gamma$ in Proposition~\ref{z_automorphism_order_2}: can it be decomposed into a product of elementary automorphisms and inner automorphisms?  The next proposition indicates one way of approaching this question.
\begin{proposition}
 Let $c_z$ be a bijective conjugation map on $k_q(x,y)$. Suppose that $c_z$ fixes some element $r \in k_q(x,y)\backslash k$, and that there is a product $\varphi$ of elementary automorphisms such that $r$ is the image of $x$ or $y$ under $\varphi$.  Then $c_z$ decomposes as a product of elementary automorphisms.
\end{proposition}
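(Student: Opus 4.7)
The plan is to conjugate $c_z$ by $\varphi$ to reduce to the case where the fixed element is a generator of $D = k_q(x,y)$. Assume without loss of generality that $\varphi(x) = r$ (the case $\varphi(y) = r$ is symmetric), and set $\psi := \varphi^{-1} \circ c_z \circ \varphi$. Since $c_z$ is bijective and $\varphi$ is an automorphism, $\psi$ is an automorphism of $D$ satisfying
\[\psi(x) = \varphi^{-1}\bigl(c_z(\varphi(x))\bigr) = \varphi^{-1}(c_z(r)) = \varphi^{-1}(r) = x.\]
It therefore suffices to show that any automorphism of $D$ fixing $x$ must be elementary, for then $c_z = \varphi \circ \psi \circ \varphi^{-1}$ will be a product of elementary automorphisms.

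To analyse such a $\psi$, apply it to the relation $xy = qyx$, giving $x\psi(y) = q\psi(y)x$. Set $p := y^{-1}\psi(y) \in D$; a short calculation using $yx = q^{-1}xy$ yields $xp = px$, so $\psi(y) = yp$ with $p$ lying in the centralizer $C_D(x)$ of $x$ in $D$. If we can show $C_D(x) = k(x)$, then $p = p(x) \in k(x)^{\times}$ and the identity $yp(x) = p(q^{-1}x)y$ will exhibit $\psi$ as the elementary automorphism $h_Y$ with $a(x) = p(q^{-1}x)$.

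The main obstacle is therefore the centralizer computation $C_D(x) = k(x)$; this is where the assumption that $q$ is not a root of unity is crucial. My approach is to work inside the skew Laurent series ring $k_q(y)(\!(x)\!)$ from Section~\ref{notation}. Expanding $u \in C_D(x)$ as $u = \sum_{i \geq n} a_i(y) x^i$ and comparing $xu$ with $ux$ forces $a_i(qy) = a_i(y)$ for every $i$. Since $q$ has infinite order, any rational function in $k(y)$ fixed by $y \mapsto qy$ must have all its zeros and poles in $\{0, \infty\}$ (otherwise a single nonzero pole would generate an infinite orbit), and comparing leading terms forces each $a_i \in k$; thus $u \in k(\!(x)\!)$. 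A final step identifies $D \cap k(\!(x)\!)$ with $k(x)$: given $u$ in this intersection and a fraction expression $u = t^{-1}s$ with $s,t \in k_q[x,y]$, expand $tu = s$ in powers of $y$ using $y^j \cdot f(x) = f(q^{-j}x)\, y^j$ and compare coefficients; this forces $u(q^{-j}x) \in k(x)$ for some $j$, whence $u \in k(x)$.
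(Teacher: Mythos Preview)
Your proof is correct and follows essentially the same strategy as the paper: conjugate by $\varphi$ (equivalently, change variables to $u=\varphi(x)$, $v=\varphi(y)$) to reduce to an automorphism fixing a generator, then use the centralizer computation $C_D(x)=k(x)$ to conclude that this automorphism is elementary of type $h_Y$. The paper simply asserts that the centralizer of $u$ in $k_q(u,v)$ is $k(u)$, whereas you actually supply a proof via the Laurent series embedding and a degree argument on $tu=s$; this extra detail is sound and fills in what the paper leaves implicit.
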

\begin{proof}
 Suppose first that $\varphi(x) = r$.  Define $u := \varphi(x)$, $v := \varphi(y)$; since $\varphi$ is a product of elementary automorphisms, this gives rise to a change of variables in $k_q(x,y)$, i.e. $k_q(x,y) = k_q(u,v)$.  

We would like to show that $c_z$ acts as an elementary automorphism on $u$ and $v$.  While $z$ is an element of $k_q(y)(\!(x)\!)$ and is not necessarily in $k_q(v)(\!(u)\!)$, $c_z$ is still a well-defined automorphism on $k_q(u,v)$ and so $c_z(v) = zvz^{-1} \in k_q(u,v)$.

Meanwhile $c_z$ fixes $u$, which $q$-commutes with both $v$ and $c_z(v)$; it is easy to see that $u$ must therefore commute with $c_z(v)v^{-1}$.  The centralizer of $u$ in $k_q(u,v)$ is precisely $k(u)$, so $c_z(v)v^{-1} = a(u) \in k(u)$.  Now
\[c_z(u) = u, \quad c_z(v) = c_z(v)v^{-1}v = a(u)v\]
is elementary as required.

Let $a(x) \in k(x)$ be the element obtained by replacing every occurrence of $u$ in $a(u)$ by $x$.  We can define an elementary automorphism on $k_q(x,y)$ by $h: x \mapsto x, \ y \mapsto a(x)y$, which allows us to write the action of $c_z$ as follows:
\[c_z(u) = u = \varphi \circ h(x), \quad c_z(v) = a(u)v = \varphi(a(x)y) = \varphi \circ h (y).\]
Hence $c_z \circ \varphi = \varphi \circ h$, and so $c_z = \varphi \circ h \circ \varphi^{-1}$ is a product of elementary automorphisms as required.  The case $\varphi(y) = r$ follows by a symmetric argument.
\end{proof}
We note that the automorphism $\gamma$ from Proposition~\ref{z_automorphism_order_2} fixes $y-y^{-1} \in E$, but it is not clear whether $y-y^{-1}$ can be written as the image of elementary automorphisms on $E$.

We finish by listing several new open questions raised by these results.
\begin{questions}\ 
\begin{enumerate}
 \item Is there an algorithm to identify elements fixed by a given conjugation map $c_z$ and establish whether they are the image of elementary automorphisms?
\item Does every conjugation automorphism $c_z$ with $z \not\in D$ decompose into a product of elementary automorphisms and an inner automorphism?  In particular, does $\gamma$ from Lemma~\ref{gamma_def} decompose in this fashion?
\item An automorphism of order 5 is defined on $D$ in \cite[\S3.3.2]{Dumas2} by
\[\pi:\ x \mapsto y, \quad y \mapsto x^{-1}(y + q^{-1}).\]
Based on preliminary computations we conjecture that $D^{\pi} \cong D$ as well.  Does $D$ admit any other automorphisms of finite order, and in particular any such automorphisms with fixed rings which are not $q$-division?
\item The ``non-bijective conjugation'' map in Proposition~\ref{z_endomorphism} gives rise to a doubly-infinite chain of $q$-division rings
\[\ldots \subsetneq z^2Dz^{-2} \subsetneq zDz^{-1} \subsetneq D \subsetneq z^{-1}Dz \subsetneq z^{-2}Dz^2 \subsetneq \dots\]
 What can be said about the limits
\[
 \bigcup_{i \geq 0} z^{-i}Dz^i\textrm{ and }\bigcap_{i \geq 0} z^iDz^{-i} \ \ ?
\]
\end{enumerate}
\end{questions}


\begin{thebibliography}{99}
\bibitem{AD1} Alev, J.; Dumas, F. \textit{Invariants du corps de Weyl sous l'action de groupes finis.} Comm. Algebra 25 (1997), no. 5, 1655-1672
\bibitem{AD2} Alev, J.; Dumas, F. \textit{Sur le corps des fractions de certaines algbres quantiques.} J. Algebra 170 (1994), no. 1, 229-265
\bibitem{AD3} Alev, J.; Dumas, F. \textit{Automorphismes de certains compl\'et\'es du corps de Weyl quantique}. Collect. Math. 46 (1995), no. 1-2, 1-9
\bibitem{AD4} Alev, J.; Dumas, F. \textit{Field generators for the quantum plane.} Algebra, arithmetic and geometry with applications (West Lafayette, IN, 2000), 127-141
 \bibitem{AC1}Artamonov, V. A.; Cohn, P. M. \textit{The skew field of rational functions on the quantum plane.} J. Math. Sci. (New York) 93 (1999), no. 6, 824-829
\bibitem{Artin} Artin, M. \textit{Some problems on three-dimensional graded domains.} Representation theory and algebraic geometry, London Math. Soc. Lecture Note Series, 238, Cambridge Univ. Press, 1997.
\bibitem{Baudry} Baudry, J. \textit{Invariants du tore quantique.} Bull. Sci. Math. 134 (2010), no. 5, 531-547
\bibitem{jason1} Bell, J. P. \textit{Division algebras of Gelfand-Kirillov transcendence degree 2.} Israel J. Math. 171 (2009), 51-60
\bibitem{daniel1} Chan, D. \textit{Twisted rings and moduli stacks of ``fat'' point modules in non-commutative projective geometry.} Adv. Math. 229 (2012), no. 4, 2184-2209
 \bibitem{skewfields} Cohn, P. M. \textit{Skew fields. Theory of general division rings.} Encyclopedia of Mathematics and its Applications, 57. Cambridge University Press, 1995.
\bibitem{dumas_invariants} Dumas, F. \textit{Non-commutative Invariants} Lecture notes, April 2010.

http://math.univ-bpclermont.fr/$\sim$fdumas/fichiers/B-Mat60.pdf
\bibitem{Dumas2} Dumas, F. \textit{Rational Equivalence for Poisson Polynomial Algebras.} Lecture notes, December 2011.
\bibitem{faith}Faith, C. \textit{Galois subrings of Ore domains are Ore domains.} Bull. Amer. Math. Soc. 78 (1972), 1077-1080
\bibitem{Lorenz} Lorenz, M. \textit{Multiplicative invariant theory.} Encyclopaedia of Mathematical Sciences, 135. Invariant Theory and Algebraic Transformation Groups, VI. Springer-Verlag, Berlin, 2005.
\bibitem{sue1} Sierra, S. J. \textit{Classifying birationally commutative projective surfaces.} Proc. Lond. Math. Soc. (3) 103 (2011), no. 1, 139-196
 \bibitem{division} Stafford, J.T. \textit{On the $q$-Division ring}, unpublished notes proving remarks from \cite[\S13.6]{SV1}.
\bibitem{SV1} Stafford, J.T.; van den Bergh, M. \textit{Noncommutative curves and noncommutative surfaces.} Bull. Amer. Math. Soc. (N.S.) 38 (2001), no. 2, 171-216
\end{thebibliography}
\end{document}